\title{Character varieties for real forms}
\author{Miguel ACOSTA \\
UPMC, 
IMJ-PRG\\
and Université de Lorraine, IECL \\
% 4, place Jussieu 75005 Paris\\ 
France
}
\begin{document}
\maketitle
%\tableofcontents

\begin{abstract}
Let $\Gamma$ be a finitely generated group and $G$ a real form of $\slnc$. We propose a definition for the $G$-character variety of $\Gamma$ as a subset of the $\slnc$-character variety of $\Gamma$. We consider two anti-holomorphic involutions of the $\slnc$ character variety and show that an irreducible representation with character fixed by one of them is conjugate to a representation taking values in a real form of $\slnc$. We study in detail an example: the $\slnc$, $\su21$ and $\mathrm{SU}(3)$ character varieties of the free product $\z3z3$.
\end{abstract}

\section{Introduction}

 Character varieties of finitely generated groups have been widely studied and used, whether from the point of view of algebraic geometry or the one of geometric structures and topology. Given a finitely generated group $\Gamma$, and a complex algebraic reductive group $G$, the $G$-character variety of $\Gamma$ is defined as the GIT quotient
 
 \[\mathcal{X}_G(\Gamma) = \mathrm{Hom}(\Gamma, G) // G .\]
It is an algebraic set that takes account of representations of $\Gamma$ with values in $G$ up to conjugacy by an element of $G$.
 See the articles of Sikora \cite{sikora_character_2012} and Heusener \cite{heusener_slnc_2016} for a detailed exposition of the construction.
 Whenever $\Gamma$ has a geometric meaning, for example when it is the fundamental group of a manifold, the character variety reflects its geometric properties. For $\mathrm{SL}_2(\mathbb{C})$-character varieties, we can cite for example the construction of the $A$-polynomial for knot complements, as detailed in the articles \cite{cooper_plane_1994} of Cooper, Culler, Gillet, Long and Shalen, and \cite{cooper_representation_1998} of Cooper and Long, or the considerations related to volume and the number of cusps of a hyperbolic manifold, as well as ideal points of character varieties treated by Morgan and Shalen in \cite{morgan_shalen}, Culler and Shalen in \cite{culler_shalen} and the book of Shalen \cite{shalen_representations_2002}. On the other hand, $\mathrm{SL}_2(\mathbb{C})$-character varieties of compact surface groups are endowed with the Atiyah-Bott-Goldman symplectic structure (see for example \cite{goldman_complex-symplectic_2004}).
  
 In the construction of character varieties, we consider an algebraic quotient $\mathrm{Hom}(\Gamma, G) // G$ where $G$ acts by conjugation. The existence of this quotient as an algebraic set is ensured by the Geometric Invariant Theory (as detailed for example in the article of Sikora \cite{sikora_character_2012}), and it is not well defined for a general algebraic group, nor when considering a non algebraically closed field. Besides that, for the compact form $\mathrm{SU}(n)$,  the classical quotient $\mathrm{Hom}(\Gamma, \mathrm{SU}(n))/\mathrm{SU}(n)$, taken in the sense of topological spaces, is well defined and Hausdorff. See the article \cite{florentino_topology_2009} of Florentino and Lawton for a detailed exposition. This quotient, that Procesi and Schwartz show in their article \cite{procesi_inequalities_1985} to be a semi-algebraic set, can be embedded in the $\slnc$-character variety; we give a proof of this last fact in Section \ref{section_chi_sun}. Similar quotients for other groups have been studied by Parreau in \cite{parreau_espaces_2011}, in which she studies completely reducible representations and in \cite{parreau_compactification_2012}, where she compactifies the space of conjugacy classes of semi-simple representations taking values in noncompact semisimple
connected real Lie groups with finite center.

 It is then natural to try to construct an object similar to a character variety for groups $G$ which are not in the cases stated above, for example real forms of $\slnc$. For the real forms of $\mathrm{SL}_2(\mathbb{C})$, Goldman studies, in his article \cite{goldman_topological_1988}, the real points of the character variety of the rank two free group $F_2$ and shows that they correspond to representations taking values either in $\mathrm{SU}(2)$ or $\mathrm{SL}_2(\mathbb{R})$, which are the real forms of $\mathrm{SL}_2(\mathbb{C})$.
 Inspired by this last approach, we will consider $\slnc$-character varieties and will try to identify the points coming from a representation taking values in a real form of $\slnc$.
 For a finitely generated group $\Gamma$, we introduce two involutions $\Phi_1$ and $\Phi_2$ of the $\slnc$-character variety of $\Gamma$ induced respectively by the involutions $A \mapsto \con{A}$ and $A \mapsto \!^{t}\con{A}^{-1}$ of $\slnc$. We show the following theorem:
 % , which is not stated in this form in this article, but is an immediate consequence of propositions \ref{prop_invol_traces} and \ref{prop_traces_reelles}.  
 
 \begin{thm}\label{main_thm}
  Let $x$ be a point of the $\slnc$-character variety of $\Gamma$ corresponding to an irreducible representation $\rho$. If $x$ is a fixed point for $\Phi_1$, then $\rho$ is conjugate to a representation taking values in $\slnr$ or $\mathrm{SL}_{n/2}(\mathbb{H})$. If $x$ is a fixed point for $\Phi_2$, then $\rho$ is conjugate to a representation taking values in a unitary group $\mathrm{SU}(p,q)$ with $p+q = n$.
 \end{thm}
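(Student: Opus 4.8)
The plan is to reduce the statement to two classical dichotomies of linear algebra --- the real/quaternionic alternative for a representation isomorphic to its complex conjugate, and the attachment of a Hermitian form to a representation isomorphic to its conjugate-dual --- once an intertwining matrix has been produced from the hypothesis on the character. The only ingredient about character varieties that I will use is that an irreducible representation $\Gamma \to \slnc$ is determined up to conjugacy by its character, together with the easy observations that the complex conjugate of an irreducible representation is irreducible, and likewise for its conjugate-dual.

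Assume first that $x$ is a fixed point of $\Phi_1$. Since $\Phi_1$ is induced by $A \mapsto \con{A}$, the character of $\rho$ equals that of the representation $\gamma \mapsto \con{\rho(\gamma)}$; by irreducibility there is $M \in \mathrm{GL}_n(\mathbb{C})$ with $\con{\rho(\gamma)} = M\,\rho(\gamma)\,M^{-1}$ for all $\gamma \in \Gamma$. Applying complex conjugation entrywise to this identity and then using the identity again shows that $\con{M}M$ commutes with every $\rho(\gamma)$, so Schur's lemma gives $\con{M}M = \lambda I$ for some $\lambda \in \mathbb{C}^{*}$; conjugating this relation gives $M\con{M} = \con{\lambda}I$, while conjugating $\con{M}M = \lambda I$ by $M$ gives $M\con{M} = \lambda I$, so $\lambda \in \mathbb{R}^{*}$, and after rescaling $M$ by a positive real we may assume $\lambda = \pm 1$. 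Now the antilinear map $\sigma \colon v \mapsto M^{-1}\con{v}$ commutes with every $\rho(\gamma)$ and satisfies $\sigma^2 = \lambda^{-1}\,\mathrm{id}$. If $\lambda = 1$, then $\sigma$ is a real structure preserved by $\rho$, and writing $\rho$ in a real basis of its fixed subspace exhibits $\rho$ as conjugate to a representation with values in $\slnr$ (alternatively, Hilbert's Theorem~90 writes $M = \con{P}^{-1}P$ and one checks that $P\rho P^{-1}$ has real entries). If $\lambda = -1$, then comparing $\det(\con{M}M) = |\det M|^2 > 0$ with $\det(-I) = (-1)^n$ forces $n$ to be even, $\sigma$ is a quaternionic structure, and together with multiplication by $i$ it makes $\mathbb{C}^n$ into an $\mathbb{H}$-vector space of dimension $n/2$ on which $\rho$ acts $\mathbb{H}$-linearly; in an $\mathbb{H}$-basis $\rho$ has values in $\mathrm{GL}_{n/2}(\mathbb{H})$, hence in $\mathrm{SL}_{n/2}(\mathbb{H})$ since $\det \rho(\gamma) = 1$.

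The case of $\Phi_2$ is parallel. As $\Phi_2$ comes from $A \mapsto \!^{t}\con{A}^{-1}$, irreducibility provides $M \in \mathrm{GL}_n(\mathbb{C})$ with $\!^{t}\con{\rho(\gamma)}^{-1} = M\,\rho(\gamma)\,M^{-1}$ for all $\gamma$. Applying the involution $A \mapsto \!^{t}\con{A}^{-1}$ to both sides and using the identity again, Schur's lemma shows that $\!^{t}\con{M}^{-1}M$ is a scalar, i.e.\ $M = \mu\,\!^{t}\con{M}$, and applying conjugate-transposition to this relation and combining gives $|\mu| = 1$; after multiplying $M$ by a suitable unimodular constant we may assume $M$ is Hermitian. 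Inverting the intertwining identity and rearranging it yields $\!^{t}\con{\rho(\gamma)}\,M\,\rho(\gamma) = M$ for all $\gamma$, which says exactly that $\rho$ preserves the nondegenerate Hermitian form with matrix $M$. If $(p,q)$ is its signature, then $p + q = n$ and in a suitable basis $\rho$ has values in $\mathrm{U}(p,q)$, hence in $\mathrm{SU}(p,q)$ because $\det \rho(\gamma) = 1$.

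The manipulations with $M$ and the passages to the various real forms are routine once set up. The step that will require genuine care is the normalisation of the scalar coming out of Schur's lemma --- that it can be taken equal to $\pm 1$ in the $\Phi_1$ case and made equal to $1$ in the $\Phi_2$ case --- since this is precisely what turns $M$ into an honest real or quaternionic structure, respectively into a Hermitian form; and, in the quaternionic alternative, one must still check that the resulting representation lands in $\mathrm{SL}_{n/2}(\mathbb{H})$ rather than merely in $\mathrm{GL}_{n/2}(\mathbb{H})$, which is where the hypotheses $\det = 1$ and $n$ even are used.
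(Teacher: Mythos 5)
Your proof is correct and follows essentially the same route as the paper: use irreducibility to get an intertwiner between $\rho$ and $\phi_i\circ\rho$, apply Schur's lemma to $\con{M}M$ (resp.\ ${}^t\con{M}^{-1}M$), normalise the resulting scalar to $\pm1$ (resp.\ to $1$, making $M$ Hermitian), and conclude via the preserved Hermitian form in the $\Phi_2$ case and the real/quaternionic dichotomy in the $\Phi_1$ case. The only difference is presentational: in the $\Phi_1$ case the paper makes the descent explicit through two elementary Hilbert-90-type lemmas producing $Q$ with $P=\con{Q}Q^{-1}$ or $P=\con{Q}J_{2m}Q^{-1}$ and conjugating by $Q$, whereas you phrase the same content in terms of the antilinear structure $\sigma=M^{-1}\circ\mathrm{conj}$ with $\sigma^2=\pm\mathrm{id}$, leaving the existence of a real (resp.\ quaternionic) basis as a standard fact.
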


  In the second section of this article, we recall the definition of $\slnc$-character varieties with some generalities and examples that will be studied further. In the third section, we recall some generalities on real forms of $\slnc$, we propose a definition for "character varieties for a real form" as a subset of the $\slnc$-character variety and we show Theorem \ref{main_thm} by combining Propositions \ref{prop_invol_traces} and \ref{prop_traces_reelles} in order to identify those character varieties beneath the fixed points of involutions $\Phi_1$ and $\Phi_2$. At last, in Section 4, we study in detail the $\mathrm{SU}(3)$ and $\su21$-character varieties of the free product $\z3z3$. This particular character variety has an interesting geometric meaning since it contains the holonomy representations of two \CR {} uniformizations: the one for the Figure Eight knot complement given by Deraux and Falbel in \cite{falbel} and the one for the Whitehead link complement given by Parker and Will in \cite{ParkerWill}.
  
  \paragraph{Acknowledgements :} The author would like to thank his advisors Antonin Guilloux and Martin Deraux, as well as Pierre Will, Elisha Falbel and Julien Marché for many discussions about this article. He would also like to thank Maxime Wolff, Joan Porti, Michael Heusener, Cyril Demarche and the PhD students of IMJ-PRG for helping him to clarify many points of the paper.
\section{$\mathrm{SL}_n(\mathbb{C})$-character varieties}

 Let $\Gamma$ be a finitely generated group and $n$ a positive integer. We are going to consider representations of $\Gamma$, up to conjugation, taking values in $\slnc$ and in its real forms. In order to study these  representations, the most adapted object to consider is the $\slnc$-character variety. 
 %We recall some steps of its definition and some properties below. 

\subsection{Definition of character varieties}

 We give here a definition of the $\slnc$-character variety and recall some useful properties.
Character varieties of finitely generated groups have been widely studied, for example in the case of $\mathrm{SL}_2(\mathbb{C})$ by Culler and Shalen in \cite{culler_shalen}. For a detailed exposition of the general results that we state, we refer to the article \cite{sikora_character_2012} of Sikora or the first sections of the article \cite{heusener_slnc_2016} of Heusener.

\begin{defn}
 The $\slnc$-representation variety of $\Gamma$ is the set $\mathrm{Hom}(\Gamma , \mathrm{SL}_n(\mathbb{C}))$. 
\end{defn}

\begin{rem}
 $\mathrm{Hom}(\Gamma , \slnc )$ is an algebraic set, not necessarily irreducible. If $\Gamma$ is generated by  $s_1, \dots , s_k$, an element of $\mathrm{Hom}(\Gamma , \slnc )$ is given by $(S_1, \dots , S_k) \in (\mathcal{M}_n(\mathbb{C}))^k \simeq \mathbb{C}^{kn^2}$ satisfying the equations $\det(S_i) = 1$ for $1 \leq i \leq k$ and, for each relation in the group, the $n^2$ equations in the coefficients associated to an equality  $S_{i_1}^{\alpha_1} \cdots S_{i_l}^{\alpha_l} = \mathrm{Id}$. Since all these equations are polynomial, they define an algebraic set, possibly with many irreducible components. Finally, by changing the generators, we obtain an isomorphism of algebraic sets. 
\end{rem}

\begin{defn}
 The group $\slnc$ acts by conjugation on $\mathrm{Hom}(\Gamma , \slnc )$. The $\slnc$-character variety is the algebraic quotient $\mathrm{Hom}(\Gamma , \slnc ) // \slnc$ by this action. We denote this algebraic set by $\mathcal{X}_{\slnc}(\Gamma)$.
\end{defn}

\begin{rem}\label{git+fonctoriel}
 The existence of this quotient is guaranteed by the Geometric Invariant Theory (GIT), and it is due to the fact that the group $\slnc$ is reductive. By construction, the ring of functions of $\mathcal{X}_{\slnc}(\Gamma)$ is exactly the ring of invariant functions of $\mathrm{Hom}(\Gamma , \slnc )$.
 Moreover, the quotient map is functorial. In particular, if we have a surjective homomorphism $\tilde{\Gamma} \rightarrow \Gamma$, we obtain an injection $\mathrm{Hom}(\Gamma, \slnc) \hookrightarrow \mathrm{Hom}(\tilde{\Gamma}, \slnc)$, which induces an injection $\mathcal{X}_{\slnc}(\Gamma) \hookrightarrow \mathcal{X}_{\slnc}(\tilde{\Gamma})$. See the article \cite{heusener_slnc_2016} of Heusener for a detailed exposition.
 \end{rem}
 
% The following result, due to Procesi (Theorem 1.3 in \cite{procesi_invariant_1976} ) tells us that it is enough to understand the trace functions in order to understand the whole ring of invariant functions.

%\begin{thm}
% The ring of invariant functions of $\mathrm{Hom}(\Gamma , \slnc )$ is generated by the trace functions $\tau_\gamma : \rho \mapsto \mathrm{tr} (\rho(\gamma))$, for $\gamma \in \Gamma$.
%\end{thm}

% More precisely, Procesi shows (Theorem 3.3 of  \cite{procesi_invariant_1976}) that this ring of functions is generated by finitely many trace functions. We can state this result in terms of character varieties in the following way:
 
%\begin{thm}
% There is a finite subset $\{ \gamma_1, \dots , \gamma_k \}$ of $\Gamma$ such that $\mathcal{X}_{\slnc}(\Gamma)$ is isomorphic, as an algebraic set, to the image of $(\tau_{\gamma_1}, \dots ,\tau_{\gamma_k}) : \mathrm{Hom}(\Gamma , \slnc ) \rightarrow \mathbb{C}^k$.
%\end{thm}
The following result, due to Procesi (Theorems 1.3 and 3.3 in \cite{procesi_invariant_1976} ) tells us that it is enough to understand the trace functions in order to understand the whole ring of invariant functions, and that this ring of functions is generated by finitely many trace functions.

\begin{thm}
 The ring of invariant functions of $\mathrm{Hom}(\Gamma , \slnc )$ is generated by the trace functions $\tau_\gamma : \rho \mapsto \mathrm{tr} (\rho(\gamma))$, for $\gamma$ in a finite subset $\{ \gamma_1, \dots , \gamma_k \}$ of $\Gamma$. Consequently, $\mathcal{X}_{\slnc}(\Gamma)$ is isomorphic, as an algebraic set, to the image of $(\tau_{\gamma_1}, \dots ,\tau_{\gamma_k}) : \mathrm{Hom}(\Gamma , \slnc ) \rightarrow \mathbb{C}^k$.
\end{thm}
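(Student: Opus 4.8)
The plan is to deduce the statement from Procesi's first fundamental theorem for invariants of tuples of matrices, and then to transfer it to $\mathrm{Hom}(\Gamma,\slnc)$ and to the GIT quotient using only reductivity, Hilbert's finiteness theorem, and elementary properties of the quotient. Fix generators $s_1,\dots,s_m$ of $\Gamma$, so that $\mathrm{Hom}(\Gamma,\slnc)$ is realized as a closed subvariety of $\slnc^m\subset\mathcal{M}_n(\mathbb{C})^m$, stable under simultaneous conjugation by $\mathrm{GL}_n(\mathbb{C})$ (the conjugation actions of $\mathrm{GL}_n(\mathbb{C})$ and $\slnc$ coincide, scalar matrices acting trivially). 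The four steps are: (1) the ring $\mathbb{C}[\mathcal{M}_n(\mathbb{C})^m]^{\mathrm{GL}_n(\mathbb{C})}$ is generated, as a $\mathbb{C}$-algebra, by the trace functions $(T_1,\dots,T_m)\mapsto\mathrm{tr}(T_{i_1}\cdots T_{i_\ell})$ attached to words in the matrix variables; (2) these invariants restrict surjectively onto the invariant functions of $\mathrm{Hom}(\Gamma,\slnc)$; (3) finitely many of the resulting trace functions already generate; (4) this algebra description of the invariant ring is exactly the claimed description of $\mathcal{X}_{\slnc}(\Gamma)$.

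Step (1) is the substantive one. I would prove it by polarization: it suffices to describe invariants that are multilinear in finitely many matrix slots $A_1,\dots,A_d$. Using $\mathcal{M}_n(\mathbb{C})=\mathrm{End}(V)=V\otimes V^{*}$ with $V=\mathbb{C}^n$, such a multilinear invariant is the same data as a $\mathrm{GL}_n(\mathbb{C})$-equivariant endomorphism of $V^{\otimes d}$. By Schur--Weyl duality the space of such endomorphisms is spanned by the operators permuting the tensor factors, that is, by the image of the group algebra $\mathbb{C}[S_d]$ of the symmetric group. Translating back, the invariant attached to a permutation $\sigma$ is the product over the cycles of $\sigma$ of trace monomials $\mathrm{tr}(A_{i_1}A_{i_2}\cdots)$; hence every multilinear invariant, and therefore (depolarizing) every invariant, is a polynomial in trace functions of words. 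On $\slnc$ one need not introduce inverses of the generators separately, since $S^{-1}$ is a polynomial in $S$ by Cayley--Hamilton together with $\det S=1$.

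For step (2): since $\mathrm{GL}_n(\mathbb{C})$ is reductive, the functor of invariants is exact, so the restriction map $\mathbb{C}[\mathcal{M}_n(\mathbb{C})^m]^{\mathrm{GL}_n(\mathbb{C})}\to\mathbb{C}[\mathrm{Hom}(\Gamma,\slnc)]^{\mathrm{GL}_n(\mathbb{C})}$ is surjective; the restrictions of the trace functions of step (1) are precisely the functions $\tau_\gamma$ for $\gamma$ an image in $\Gamma$ of a word in $s_1,\dots,s_m$, so these generate the invariant ring. For step (3), Hilbert's finiteness theorem gives that this invariant ring is a finitely generated $\mathbb{C}$-algebra, so finitely many of the $\tau_\gamma$, say $\tau_{\gamma_1},\dots,\tau_{\gamma_k}$, already generate it. For step (4), by definition the coordinate ring of $\mathcal{X}_{\slnc}(\Gamma)=\mathrm{Hom}(\Gamma,\slnc)//\slnc$ is this invariant ring, and a choice of algebra generators $\tau_{\gamma_1},\dots,\tau_{\gamma_k}$ yields a surjective $\mathbb{C}$-algebra homomorphism $\mathbb{C}[x_1,\dots,x_k]\to\mathbb{C}[\mathcal{X}_{\slnc}(\Gamma)]$, hence a closed immersion $\mathcal{X}_{\slnc}(\Gamma)\hookrightarrow\mathbb{C}^k$ whose composition with the quotient map $\mathrm{Hom}(\Gamma,\slnc)\to\mathcal{X}_{\slnc}(\Gamma)$ is $(\tau_{\gamma_1},\dots,\tau_{\gamma_k})$. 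Since the GIT quotient map is surjective on points, the image of $(\tau_{\gamma_1},\dots,\tau_{\gamma_k})$ equals the image of this closed immersion, which is a closed subvariety of $\mathbb{C}^k$ isomorphic to $\mathcal{X}_{\slnc}(\Gamma)$.

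The main obstacle is step (1), Procesi's theorem itself: the Schur--Weyl identification and the combinatorial dictionary between permutations and products of traces of words. Steps (2)--(4) are formal consequences of reductivity, Hilbert's theorem, and basic GIT. Two points require care within step (1): that single trace functions $\tau_\gamma$ (not merely products of traces) suffice as algebra generators --- which they do, products of traces being products of the $\tau_\gamma$ --- and that the polarization/depolarization argument genuinely keeps all invariants inside the subalgebra generated by traces of words.
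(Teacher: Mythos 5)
Your argument is correct, and it matches the paper in the only sense available: the paper does not prove this statement but quotes it from Procesi (Theorems 1.3 and 3.3 of the cited article), and your steps (1)--(4) are precisely the standard proof of those results together with the routine transfer --- polarization plus Schur--Weyl duality for the first fundamental theorem on invariants of matrix tuples, surjectivity of restriction to the closed $\mathrm{GL}_n(\mathbb{C})$-stable subvariety $\mathrm{Hom}(\Gamma,\slnc)$ via linear reductivity (Reynolds operator), Hilbert finiteness to extract finitely many $\tau_{\gamma_i}$, and surjectivity of the affine GIT quotient map to identify $\mathcal{X}_{\slnc}(\Gamma)$ with the image of $(\tau_{\gamma_1},\dots,\tau_{\gamma_k})$. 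No gaps beyond the level of detail one would expect in a sketch of Procesi's theorem itself.
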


 Character varieties are strongly related to characters of representations. Let us briefly recall their definition: 

\begin{defn}
 Let $\rho \in \mathrm{Hom}(\Gamma , \slnc )$. The \emph{character of $\rho$} is the function $\chi_\rho : \Gamma \rightarrow \mathbb{C}$ given by $\chi_\rho(g) = \mathrm{tr}(\rho(g))$.
\end{defn}

\begin{rem}
 We have a projection map $\mathrm{Hom}(\Gamma , \slnc ) \rightarrow \mathcal{X}_{\slnc}(\Gamma)$. Two representations  $\rho , \rho' \in \mathrm{Hom}(\Gamma , \slnc )$ have the same image if and only if $\chi_\rho = \chi_{\rho'}$. This explains the name "character variety" for $\mathcal{X}_{\slnc}(\Gamma)$. We will sometimes abusively identify the image of a representation $\rho$ in the character variety with its character $\chi_\rho$. 
\end{rem} 

% We give the following statement that can be found at the end of chapter one in the book of Lubotzki and Magid  \cite{lubotzky_varieties_1985}. It is about semi-simple representations; the ones that are direct sums of irreducible representations. We will use it especially when dealing with irreducible representations. 

Semi-simple representations are representations constructed as direct sums of irreducible representations. We will use the following statement when dealing with irreducible representations. 

\begin{thm} \label{thm_semisimple_char} (Theorem 1.28 of \cite{lubotzky_varieties_1985})
 Let $\rho , \rho' \in \mathrm{Hom}(\Gamma , \slnc )$ be two semi-simple representations. Then $\chi_\rho = \chi_{\rho'}$ if and only if $\rho$ and $\rho'$ are conjugate.
\end{thm}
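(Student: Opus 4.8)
The ``only if'' direction is immediate, since the trace is invariant under conjugation. For the converse, assume $\chi_\rho = \chi_{\rho'}$; the plan is to reconstruct an intertwiner from the associative algebras generated by the two representations. Let $A \subseteq \mathcal{M}_n(\mathbb{C})$ be the unital $\mathbb{C}$-subalgebra generated by $\rho(\Gamma)$, and $A' \subseteq \mathcal{M}_n(\mathbb{C})$ the one generated by $\rho'(\Gamma)$. Since $\rho$ is semi-simple, $\mathbb{C}^n$ is a semi-simple $A$-module, and as it is moreover faithful, $A$ is a semi-simple $\mathbb{C}$-algebra; likewise for $A'$. Note that $A$ is linearly spanned by $\{\rho(\gamma) : \gamma \in \Gamma\}$ and $A'$ by $\{\rho'(\gamma) : \gamma \in \Gamma\}$.

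First I would produce an algebra isomorphism $\phi \colon A \to A'$ carrying $\rho(\gamma)$ to $\rho'(\gamma)$. Consider the subalgebra $B \subseteq \mathcal{M}_n(\mathbb{C}) \times \mathcal{M}_n(\mathbb{C})$ generated by the elements $(\rho(\gamma), \rho'(\gamma))$, with its two surjective projections $\pi \colon B \to A$ and $\pi' \colon B \to A'$. By hypothesis $\mathrm{tr}(\pi(b)) = \mathrm{tr}(\pi'(b))$ when $b = (\rho(\gamma), \rho'(\gamma))$, hence, by linearity, for every $b \in B$. The key step is $\ker \pi = \ker \pi'$: if $b \in \ker \pi$ then for all $c \in B$ one has $\mathrm{tr}(\pi'(b)\pi'(c)) = \mathrm{tr}(\pi'(bc)) = \mathrm{tr}(\pi(bc)) = 0$, and since $\pi'$ is onto $A'$ this means $\pi'(b)$ is orthogonal to all of $A'$ for the bilinear form $(a,a') \mapsto \mathrm{tr}(aa')$. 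That form is non-degenerate: writing $A' \cong \prod_j \mathcal{M}_{d_j}(\mathbb{C})$, faithfulness forces each isotypic multiplicity on $\mathbb{C}^n$ to be $\geq 1$, so $\mathrm{tr}$ restricts to a nonzero multiple of the non-degenerate trace form on each simple factor; hence $\pi'(b) = 0$. The reverse inclusion is symmetric, so $\phi := \pi' \circ \pi^{-1}$ is a well-defined isomorphism of $\mathbb{C}$-algebras with $\phi(\rho(\gamma)) = \rho'(\gamma)$.

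To conclude, I would upgrade $\phi$ to conjugation by a matrix. View $\mathbb{C}^n$ as an $A$-module $M$ in the tautological way, and as an $A$-module $M'$ through $\phi$, that is $a \cdot v := \phi(a)v$. Both are semi-simple $A$-modules of dimension $n$, and a semi-simple module over a finite-dimensional semi-simple $\mathbb{C}$-algebra is determined up to isomorphism by its character, because the irreducible characters of $A$ are linearly independent functionals. The characters of $M$ and $M'$ are $a \mapsto \mathrm{tr}(a)$ and $a \mapsto \mathrm{tr}(\phi(a))$; these agree on each $\rho(\gamma)$ by hypothesis, hence on all of $A$ by linearity. Therefore $M \cong M'$ as $A$-modules, and any such isomorphism is an invertible $g \in \mathrm{GL}_n(\mathbb{C})$ with $g\,\rho(\gamma)\,g^{-1} = \rho'(\gamma)$ for every $\gamma$; replacing $g$ by $\lambda g$ for an $n$-th root $\lambda$ of $1/\det g$ puts $g \in \slnc$.

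The step I expect to demand the most care is the non-degeneracy argument behind $\ker\pi = \ker\pi'$: this is exactly where semi-simplicity of $\rho$ and $\rho'$ enters, and it must enter, since without it the statement is false — a non-trivial unipotent representation and the trivial one of the same dimension share their character yet are not conjugate. Concretely, one has to be careful that $\mathbb{C}^n$ is genuinely a semi-simple module over the algebra generated by the representation (this is precisely the hypothesis that $\rho$ is semi-simple), so that $A$ and $A'$ have trivial Jacobson radical and the trace form does not degenerate.
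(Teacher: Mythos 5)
Your proof is correct. Note that the paper does not actually prove this statement: it is imported verbatim as Theorem 1.28 of Lubotzky--Magid, so there is no internal argument to compare yours with; what you wrote is essentially the standard characteristic-zero proof lying behind that citation (the character determines a semi-simple module). Your two key steps are both sound: the equality $\ker\pi=\ker\pi'$ via non-degeneracy of the form $(a,b)\mapsto\mathrm{tr}(ab)$ on a semi-simple subalgebra of $\mathcal{M}_n(\mathbb{C})$ acting faithfully on $\mathbb{C}^n$, and the identification of the two $A$-module structures on $\mathbb{C}^n$ via linear independence of the irreducible characters of a semi-simple algebra. Two small points you leave implicit and could state explicitly: first, $B$ equals the linear span of the pairs $(\rho(\gamma),\rho'(\gamma))$ (this span is already a unital algebra because $\Gamma$ is a group), which is what allows you to propagate the trace identity from the generators to all of $B$ by linearity; second, the module $M'$ obtained by pulling back the tautological $A'$-module $\mathbb{C}^n$ along the isomorphism $\phi$ is again semi-simple, which follows at once from semi-simplicity of $\rho'$. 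Your closing rescaling of $g$ by an $n$-th root of $1/\det g$ correctly handles the passage from $\mathrm{GL}_n(\mathbb{C})$-conjugacy to $\slnc$-conjugacy, and your remark that semi-simplicity is indispensable (unipotent versus trivial representations) identifies exactly where the hypothesis is used.
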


\subsection{Some $\mathrm{SL}_2(\mathbb{C})$ and $\sl3c$-character varieties}
We consider here two  $\sl3c$-character varieties that we will study further: the character variety of the free group of rank two $F_2$ and the one of the fundamental group of the Figure Eight knot complement. We will also recall a classic result describing the $\mathrm{SL}_2(\mathbb{C})$-character variety of $F_2$.

\subsubsection{The free group of rank 2}
We denote here by $s$ and $t$ two generators of the free group of rank two $F_2$, so $F_2 = \langle s,t \rangle$. We will use the character varieties $\mathcal{X}_{\mathrm{SL}_2(\mathbb{C})} (F_2)$ and $\mathcal{X}_{\mathrm{\sl3c}} (F_2)$.
Consider first the following theorem, that describes the $\mathrm{SL}_2(\mathbb{C})$-character variety of $F_2$. A detailed proof can be found in the article of Goldman \cite{goldman_exposition_2004}.

 \begin{thm}[Fricke-Klein-Vogt]
  The character variety $\mathcal{X}_{\mathrm{SL}_2(\mathbb{C})} (F_2)$ is isomorphic to $\mathbb{C}^3$, which is the image of $\mathrm{Hom}(F_2 , \mathrm{SL}_2(\mathbb{C}) )$ by the trace functions of the elements $s,t$ and $st$.
 \end{thm}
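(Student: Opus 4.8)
The plan is to show that the trace map $\tau = (\tau_s, \tau_t, \tau_{st}) \colon \mathrm{Hom}(F_2, \mathrm{SL}_2(\mathbb{C})) \to \mathbb{C}^3$ is surjective and that it induces an isomorphism of algebraic sets onto $\mathbb{C}^3$. By the theorem of Procesi quoted above, we already know that $\mathcal{X}_{\mathrm{SL}_2(\mathbb{C})}(F_2)$ is isomorphic to the image of some finite tuple of trace functions; the content here is that for $F_2$ three trace functions suffice and the image is all of $\mathbb{C}^3$ with no relations. Since $F_2$ is free, $\mathrm{Hom}(F_2, \mathrm{SL}_2(\mathbb{C}))$ is simply $\mathrm{SL}_2(\mathbb{C}) \times \mathrm{SL}_2(\mathbb{C})$ with no constraints coming from relators, which is what makes the answer so clean.

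First I would establish surjectivity of $\tau$: given $(x,y,z) \in \mathbb{C}^3$, I would exhibit an explicit pair $(S,T) \in \mathrm{SL}_2(\mathbb{C})^2$ with $\mathrm{tr}(S) = x$, $\mathrm{tr}(T) = y$, $\mathrm{tr}(ST) = z$ — for instance taking $S = \begin{pmatrix} x & -1 \\ 1 & 0 \end{pmatrix}$ and $T = \begin{pmatrix} 0 & \zeta \\ -\zeta^{-1} & y \end{pmatrix}$ for a suitable $\zeta$, and checking that $\mathrm{tr}(ST)$ can be made equal to any prescribed $z$. Second, I would show that $\tau$ separates the relevant equivalence classes, i.e. that two semisimple representations with the same triple of traces are conjugate; this uses Theorem \ref{thm_semisimple_char}, reduced to the concrete case $n=2$ where one argues directly via eigenvalues and, in the reducible case, upper-triangular normal forms. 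The combination of these two steps shows $\tau$ descends to a bijective morphism $\mathcal{X}_{\mathrm{SL}_2(\mathbb{C})}(F_2) \to \mathbb{C}^3$. Finally, to upgrade bijectivity to an isomorphism of algebraic sets, I would invoke that the coordinate ring of $\mathcal{X}_{\mathrm{SL}_2(\mathbb{C})}(F_2)$ is the ring of $\mathrm{SL}_2(\mathbb{C})$-invariant functions on $\mathrm{SL}_2(\mathbb{C})^2$, and cite the classical fact (Fricke) that this invariant ring is the polynomial ring $\mathbb{C}[\tau_s, \tau_t, \tau_{st}]$ — freely generated, with no syzygies — so that $\tau^*$ is an isomorphism of coordinate rings.

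The main obstacle is the last step: proving that the invariant ring is genuinely a free polynomial ring on these three generators, rather than merely that these three functions generate it and separate points. Separation of points plus finite generation does not by itself give an isomorphism onto $\mathbb{C}^3$ in the category of algebraic sets; one needs that there are no algebraic relations among $\tau_s, \tau_t, \tau_{st}$ and that they generate the whole invariant ring. This is where I would lean on the cited detailed exposition of Goldman \cite{goldman_exposition_2004}, whose argument handles the invariant-theoretic bookkeeping — typically via the Cayley–Hamilton identity $\mathrm{tr}(AB) + \mathrm{tr}(AB^{-1}) = \mathrm{tr}(A)\mathrm{tr}(B)$ in $\mathrm{SL}_2$, which lets one reduce any word's trace to a polynomial in the three basic traces, combined with a dimension count ($\dim \mathrm{SL}_2(\mathbb{C})^2 /\!/ \mathrm{SL}_2(\mathbb{C}) = 6 - 3 = 3$) to rule out relations.
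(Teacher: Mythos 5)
Your outline is correct and is essentially the argument of Goldman's exposition \cite{goldman_exposition_2004}, which is exactly what the paper relies on here (it states the theorem and refers to that reference rather than proving it): explicit matrices give surjectivity of $(\tau_s,\tau_t,\tau_{st})$, the $\mathrm{SL}_2$ trace identities plus Procesi's theorem give generation of the invariant ring by these three traces, and the absence of relations follows. In fact, once generation and surjectivity onto $\mathbb{C}^3$ are in hand, freeness is automatic because the invariant ring is reduced, so any relation would have to vanish identically on $\mathbb{C}^3$; the dimension count you mention is thus not even needed.
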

 
 \begin{rem}
 Thanks to the theorem below, we know that it is possible to write the trace of the image of $st^{-1}$ in terms of the traces of the images of $s,t$ and $st$ for any representation $\rho : F_2 \rightarrow \mathrm{SL}_2(\mathbb{C})$. By denoting $S$ and $T$ the respective images of $s$ and $t$, the traces of the four elements are related by the \emph{trace equation}:
 \[\mathrm{tr}(S) \mathrm{tr}(T) = \mathrm{tr}(ST) + \mathrm{tr}(ST^{-1}). \]
 \end{rem}

 On the other hand, in his article \cite{lawton_generators_2007}, Lawton describes the $\sl3c$-character variety of $F_2$. He obtains the following result:

\begin{thm}
 $\mathcal{X}_{\mathrm{\sl3c}} (F_2)$ is isomorphic to the algebraic set $V$ of $\mathbb{C}^9$, which is the image of $\mathrm{Hom}(F_2 , \sl3c )$ by the trace functions of the elements $s,t,st,st^{-1}$, of their inverses $s^{-1},t^{-1},t^{-1}s^{-1},ts^{-1}$, and of the commutator $[s,t]$.
 Furthermore, there exist two polynomials $P,Q \in \mathbb{C}[X_1 , \dots , X_8]$ such that $(x_1, \dots , x_9) \in  V$ if and only if $x_9^2 - Q(x_1 , \dots , x_8) x_9 + P(x_1, \dots x_8) = 0$.
\end{thm}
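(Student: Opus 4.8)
The plan is to build everything on Procesi's first fundamental theorem quoted above: it already gives an isomorphism between $\mathcal{X}_{\sl3c}(F_2)$ and the image of $\mathrm{Hom}(F_2,\sl3c)$ under a finite tuple of trace functions. What remains is \emph{(i)} to prove that the nine specific words $s,t,st,st^{-1}$, their inverses $s^{-1},t^{-1},t^{-1}s^{-1},ts^{-1}$, and the commutator $[s,t]=sts^{-1}t^{-1}$ already generate the ring of invariants of $\mathrm{Hom}(F_2,\sl3c)$, and \emph{(ii)} to exhibit the single quadratic relation among the corresponding nine coordinate functions.

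For \emph{(i)} I would argue by induction on word length, the two tools being the Cayley--Hamilton theorem for $3\times 3$ matrices and its multilinearization. Writing $\mathrm{tr}$ for the trace and $c_2$ for the second coefficient of the characteristic polynomial, for $M\in\sl3c$ one has $c_2(M)=\mathrm{tr}(M^{-1})=\frac12\big(\mathrm{tr}(M)^2-\mathrm{tr}(M^2)\big)$ and
\[ M^{2}=\mathrm{tr}(M)\,M-\mathrm{tr}(M^{-1})\,I+M^{-1}, \]
so every power of $M$, positive or negative, lies in the $\mathbb{C}[\mathrm{tr}(M),\mathrm{tr}(M^{-1})]$-span of $\{M^{-1},I,M\}$. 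Applied to $S=\rho(s)$ and $T=\rho(t)$, this reduces the trace of an arbitrary word to a polynomial, with coefficients in $\mathbb{C}[\mathrm{tr}(S^{\pm1}),\mathrm{tr}(T^{\pm1})]$, of traces of reduced words alternating between $\{S^{\pm1}\}$ and $\{T^{\pm1}\}$. The multilinearization of Cayley--Hamilton, a polynomial identity in three matrix variables $X,Y,Z$, then expresses $\mathrm{tr}(XYZ)+\mathrm{tr}(XZY)$ through traces of words of strictly smaller length; feeding the alternating words into it repeatedly brings everything down to traces of words of length at most four, and a bookkeeping check identifies these with polynomials in the first eight of the listed trace functions, the only exception being $\mathrm{tr}(STS^{-1}T^{-1})$. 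I expect this reduction --- keeping precise track of which short words survive and checking that nothing beyond the nine is needed --- to be the main obstacle; it is the explicit computation of Lawton's paper rather than a conceptual step.

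For \emph{(ii)}: since $[s,t]=sts^{-1}t^{-1}$ and $[s,t]^{-1}=tst^{-1}s^{-1}$, if $C=\rho(sts^{-1}t^{-1})$ then the ninth coordinate is $x_9=\mathrm{tr}(C)$ and the trace of the mirror commutator is $\mathrm{tr}(C^{-1})$. Putting $Q=\mathrm{tr}(C)+\mathrm{tr}(C^{-1})$ and $P=\mathrm{tr}(C)\,\mathrm{tr}(C^{-1})$, the coordinate $x_9$ is a root of
\[ z^{2}-Q\,z+P=0. \]
The content of the relation is that $Q$ and $P$ are polynomials in the first eight coordinates: expanding $\mathrm{tr}(STS^{-1}T^{-1})$ and $\mathrm{tr}(TST^{-1}S^{-1})$ with the same identities as in \emph{(i)}, each expansion on its own still involves the forbidden quantity $\mathrm{tr}(C)$, but in the sum $Q$ and the product $P$ those contributions cancel, leaving $P,Q\in\mathbb{C}[X_1,\dots,X_8]$; since the statement only asserts existence of $P,Q$, one need not write them out.

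Finally, assembling the pieces: by \emph{(i)} the nine trace functions give a surjective ring homomorphism $\mathbb{C}[X_1,\dots,X_9]\to\mathcal{O}\big(\mathcal{X}_{\sl3c}(F_2)\big)$, hence a closed embedding of $\mathcal{X}_{\sl3c}(F_2)$ onto a closed subset $V\subseteq\mathbb{C}^9$, and by \emph{(ii)} $V$ lies in the hypersurface $W=\{\,x_9^{2}-Q(x_1,\dots,x_8)\,x_9+P(x_1,\dots,x_8)=0\,\}$. To upgrade $V\subseteq W$ to an equality I would compare dimensions: $\mathrm{Hom}(F_2,\sl3c)=\sl3c\times\sl3c$ is irreducible of dimension $16$ and the conjugation action has finite (central) generic stabilizer, so $\dim V=16-8=8=\dim W$; as $V$ is irreducible and $W$ is irreducible of the same dimension --- equivalently, as one checks that a generic fibre of the projection to the first eight coordinates consists of exactly two representations realizing the two roots of the quadratic --- the inclusion must be an equality, which is the asserted description of $V$.
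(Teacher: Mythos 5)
First, a point of comparison: the paper does not prove this statement at all --- it is Lawton's theorem, quoted from \cite{lawton_generators_2007} --- so your attempt can only be measured against the standard proof there, which is an explicit trace calculus for pairs of $3\times 3$ matrices. Your outline follows that same route (Procesi's finiteness theorem, Cayley--Hamilton and its polarization to reduce traces of long words, then a quadratic relation satisfied by $\mathrm{tr}[s,t]$), but as written it is a plan rather than a proof: each load-bearing step is deferred rather than carried out.

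Concretely: (1) the heart of the theorem is that the nine listed trace functions generate the invariant ring. Your reduction scheme correctly eliminates powers and brings every word trace down to traces of shorter words, but the assertion that everything then lands in the first eight coordinates, ``the only exception being $\mathrm{tr}(STS^{-1}T^{-1})$,'' after ``a bookkeeping check,'' is precisely the nontrivial content of Lawton's computation (resting on the known generators of the invariant ring of two generic $3\times 3$ matrices); you do not perform it. (2) Similarly, the claim that in $Q=\mathrm{tr}(C)+\mathrm{tr}(C^{-1})$ and $P=\mathrm{tr}(C)\,\mathrm{tr}(C^{-1})$ the contributions involving $\mathrm{tr}(C)$ cancel, leaving polynomials in $X_1,\dots,X_8$, is asserted without argument; it is a genuine computation, not a formal consequence of step (1). (3) In the final step, $V\subseteq W$ together with $\dim V=\dim W=8$ yields $V=W$ only if $W$ is irreducible, i.e.\ only if $x_9^2-Qx_9+P$ is irreducible over $\mathbb{C}[X_1,\dots,X_8]$, equivalently $Q^2-4P$ is not a square; your fallback --- that the generic fibre of $V\to\mathbb{C}^8$ has exactly two points --- would indeed repair this (an irreducible $V$ with generically two-point fibres cannot lie in a split hypersurface), but it is again only ``one checks'': you would need to verify that both roots are realized by characters with the same first eight coordinates (for instance via $(S,T)\mapsto({}^{t}S,{}^{t}T)$, which fixes the first eight trace functions and exchanges $\mathrm{tr}[S,T]$ with $\mathrm{tr}[T,S]$) and that the two roots are generically distinct, e.g.\ by exhibiting a single pair $(S,T)$ with $\mathrm{tr}[S,T]\neq\mathrm{tr}[T,S]$. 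So the skeleton is the right one, but the generation statement, the existence of $P$ and $Q$, and the equality with the hypersurface are all left unproven.
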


 \begin{rem}
 The polynomials $P$ and $Q$ are explicit: we can find them in the article of Lawton \cite{lawton_generators_2007} or in the survey of Will \cite{will_generateurs}. By denoting $\Delta = Q^2 - 4P$, the algebraic set $V$ is a double cover of $\mathbb{C}^8$, ramified over the zero level set of $\Delta$. Furthermore, the two roots of $X_9^2 - Q(x_1 , \dots , x_8) X_9 + P(x_1, \dots x_8)$, as a polynomial in $X_9$, are given by the traces of the commutators $[s,t]$ and $[t,s] = [s,t]^{-1}$.
 \end{rem}
 
\subsubsection{The Figure Eight knot complement}\label{sous_sect_char_sl3c_m8}
 We state briefly some results about the $\sl3c$-character variety of the figure eight knot complement. It is one of the very few $\sl3c$-character varieties of three-manifolds studied exhaustively. We will come back to it in Subsection \ref{sous_sect_descr_chi_z3z3}.
 The results were obtained independently by Falbel, Guilloux, Koseleff, Rouiller and Thistlethwaite in \cite{character_sl3c}, and by Heusener, Muñoz and Porti in \cite{heusener_sl3c-character_2015}. Denoting by $\Gamma_8$ the fundamental group of the Figure Eight knot complement, they describe the character variety $\mathcal{X}_{\mathrm{\sl3c}} (\Gamma_8)$.
Theorem 1.2 of \cite{heusener_sl3c-character_2015} can be stated in the following way:
 
 \begin{thm}
  The character variety $\mathcal{X}_{\mathrm{\sl3c}} (\Gamma_8)$ has five irreducible components: $X_{\mathrm{TR}}$, $X_{\mathrm{PR}}$, $R_1$, $R_2$, $R_3$. Furthermore:
\begin{enumerate}
 \item The component $X_{\mathrm{TR}}$ only contains characters of completely reducible representations.
 \item The component $X_{\mathrm{PR}}$ only contains characters of reducible representations.
 \item The components $R_1$, $R_2$, $R_3$ contain the characters of irreducible representations.
\end{enumerate}  
 \end{thm}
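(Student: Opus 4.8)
The plan is to realize $\mathcal{X}_{\mathrm{\sl3c}}(\Gamma_8)$ explicitly inside the variety $\mathcal{X}_{\mathrm{\sl3c}}(F_2) = V \subset \mathbb{C}^9$ of Lawton's theorem and then decompose it. I would start from a two-bridge presentation $\Gamma_8 = \langle a, b \mid r(a,b)\rangle$ in which $a$ and $b$ are meridians, hence conjugate in $\Gamma_8$. The surjection $F_2 \to \Gamma_8$ and Remark \ref{git+fonctoriel} give an embedding $\mathcal{X}_{\mathrm{\sl3c}}(\Gamma_8) \hookrightarrow V$ with closed image, namely the set of characters of pairs $(A,B) \in \sl3c \times \sl3c$ with $r(A,B) = \mathrm{Id}$. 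Two reductions make this image computable: since $a$ and $b$ are conjugate, $A$ and $B$ have the same characteristic polynomial, which forces $\mathrm{tr}(A) = \mathrm{tr}(B)$ and $\mathrm{tr}(A^{-1}) = \mathrm{tr}(B^{-1})$ among Lawton's nine coordinates; and a pair $(A,B)$ spanning an irreducible representation can be put in an explicit normal form, so that the entries of the word $r(A,B)$ become polynomials in a handful of free parameters and $r(A,B) = \mathrm{Id}$ becomes a finite polynomial system.

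Next I would isolate the reducible characters. Since $H_1(\Gamma_8) = \mathbb{Z}$, every abelian representation factors through the abelianization and has the character of a diagonal one $\mathrm{diag}(\alpha,\beta,(\alpha\beta)^{-1})$; these characters form the image of $\mathrm{Hom}(\mathbb{Z},\sl3c) // \sl3c \cong \mathbb{C}^2$, a closed irreducible surface all of whose points are characters of completely reducible representations, which I would call $X_{\mathrm{TR}}$. A representation having a proper invariant subspace but failing to be semisimple, or splitting as $2 \oplus 1$, has the character of its semisimplification $\psi \oplus \lambda$ with $\psi : \Gamma_8 \to \mathrm{GL}_2(\mathbb{C})$ and $\lambda = (\det\psi)^{-1}$; writing $\psi$ (character-wise) as a character twist of an $\mathrm{SL}_2(\mathbb{C})$-representation and using that $\mathcal{X}_{\mathrm{SL}_2(\mathbb{C})}(\Gamma_8)$ is an abelian line together with one irreducible non-abelian curve, one sees that these characters sweep out a second irreducible surface $X_{\mathrm{PR}}$, all of whose points are characters of reducible representations. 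By Theorem \ref{thm_semisimple_char} the character of a completely reducible representation is never the character of an irreducible one, so each of $X_{\mathrm{TR}}$ and $X_{\mathrm{PR}}$ is a genuine irreducible component ($2$-dimensional, irreducible, not contained in nor containing any other component), and together they account for exactly the reducible characters.

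The core step is to show that the Zariski closure of the locus of irreducible characters is a union of three $2$-dimensional irreducible components $R_1, R_2, R_3$. Substituting the normal form of an irreducible pair into $r(A,B) = \mathrm{Id}$ and imposing the conjugacy constraints produces an ideal in a polynomial ring of few variables, and the claim amounts to its primary decomposition — the computation done with computer algebra in \cite{character_sl3c} and, through a choice of coordinates adapted to the relator word, essentially by hand in \cite{heusener_sl3c-character_2015}. One then identifies $R_1$ as the component containing the character of $\mathrm{Sym}^2 \circ \rho_{\mathrm{geo}}$, where $\rho_{\mathrm{geo}}$ is the holonomy of the complete hyperbolic structure on the figure eight knot complement and $\mathrm{Sym}^2 : \mathrm{SL}_2(\mathbb{C}) \to \sl3c$ is the irreducible three-dimensional representation; its dimension is the value $2$ expected for the $\sl3c$-character variety of a one-cusped hyperbolic $3$-manifold, and the $\mathrm{Sym}^2$-image of the $\mathrm{SL}_2(\mathbb{C})$-character curve lies inside it. The remaining branches $R_2, R_3$ are distinguished from $R_1$ and from each other by explicit trace functions, and the symmetries of the figure eight knot permute the triple $\{R_1, R_2, R_3\}$; together with the distinctness from $X_{\mathrm{TR}}$ and $X_{\mathrm{PR}}$ (which carry only reducible characters), this exhibits the five listed sets as precisely the irreducible components.

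The main obstacle is exactly the primary decomposition of the ideal cutting out the irreducible-representation locus: proving there are \emph{exactly} three components, each irreducible of dimension $2$, is not formal and is the computational heart of both \cite{character_sl3c} and \cite{heusener_sl3c-character_2015}. A secondary difficulty is the bookkeeping at the interface of the strata — verifying that every partially reducible character really lies in $X_{\mathrm{PR}}$, that reducible characters arising as limits of irreducible ones create no extra components inside $R_1 \cup R_2 \cup R_3$, and that $X_{\mathrm{TR}}$ and $X_{\mathrm{PR}}$ are not contained in some $R_i$.
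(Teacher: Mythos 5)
There is an important mismatch of expectations here: the paper does not prove this statement at all. It is quoted as Theorem 1.2 of \cite{heusener_sl3c-character_2015} (obtained independently in \cite{character_sl3c}) and serves purely as background for the discussion of the Thistlethwaite slice later on, so there is no internal proof to compare your argument against. Judged on its own, your outline does reproduce the strategy of those references faithfully: realizing $\mathcal{X}_{\mathrm{SL}_3(\mathbb{C})}(\Gamma_8)$ inside Lawton's model via a two-bridge presentation with conjugate meridians, splitting off the totally reducible characters (an affine plane, since $H_1(\Gamma_8)\cong\mathbb{Z}$) and the partially reducible ones (character twists of the non-abelian $\mathrm{SL}_2(\mathbb{C})$-character curve of the figure eight knot), and identifying $R_1$ through $\mathrm{Sym}^2$ of the geometric representation with $R_2$, $R_3$ related by an outer automorphism, which matches the remarks the paper makes about these components.

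As a proof, however, it has a genuine gap, which you yourself flag: the decisive assertion --- that the closure of the locus of irreducible characters consists of exactly three irreducible components, each of dimension $2$, and nothing else --- is not derived but delegated to the primary decomposition carried out by computer in \cite{character_sl3c} and by hand in \cite{heusener_sl3c-character_2015}. Nothing in your argument by itself excludes fewer or more components, or components of unexpected dimension; this is the entire content of the theorem beyond the routine stratification by reducibility. Two smaller points are likewise asserted rather than checked: the irreducibility of $X_{\mathrm{PR}}$ rests on the fact that the non-abelian part of $\mathcal{X}_{\mathrm{SL}_2(\mathbb{C})}(\Gamma_8)$ is a single irreducible curve, and one must verify that the reducible characters appearing in the closures of $R_1$, $R_2$, $R_3$ fall inside $X_{\mathrm{TR}}\cup X_{\mathrm{PR}}$ rather than creating further components. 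In short, your text is an accurate roadmap of the cited proofs and would be entirely appropriate as a citation --- which is exactly how the paper treats this theorem --- but it is not a self-contained proof.
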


 \begin{rem}
  We take here the notation $R_1$, $R_2$, $R_3$ given in \cite{character_sl3c}. These components are denoted respectively by $V_0$, $V_1$ and $V_2$ in \cite{heusener_sl3c-character_2015}.
 \end{rem} 

\begin{rem}
\begin{itemize}
\item The component $R_1$ contains the class of the geometric representation, obtained as the holonomy representation $\Gamma_8 \rightarrow \mathrm{PSL}_2(\mathbb{C})$ of the complete hyperbolic structure of the Figure Eight knot followed by the irreducible representation $\mathrm{PSL}_2(\mathbb{C}) \rightarrow \sl3c$.
 
 \item The component $R_3$ is obtained from $R_2$ by a pre-composition with an outer automorphism of $\Gamma_8$. These components contain the representations $\rho_2$ and $\rho_3$ with values in $\su21$ given by Falbel in \cite{falbel_spherical}. The representation $\rho_2$ is the holonomy representation of the \CR {} uniformization of the Figure Eight knot complement given by Deraux and Falbel in \cite{falbel}.
 \end{itemize}
\end{rem}

 Besides determining the irreducible components $R_1$, $R_2$ and $R_3$, Falbel, Guilloux, Koseleff, Rouillier and Thistlethwaite give parameters, in Section 5 of their article \cite{character_sl3c}, for explicit representations corresponding to the points of the character variety.

\section{Character varieties for real forms}
 We are going to be interested in representations of a finitely generated group $\Gamma$ taking values in some real forms of $\slnc$ up to conjugacy. We will focus on the real forms $\mathrm{SU}(3)$ and $\su21$ of $\sl3c$ in the detailed example that we will consider further. In order to study the representations up to conjugacy, we will consider the $\slnc$-character variety and will try to figure out the locus of representations taking values in real forms. When $n=2$, the problem was treated by Morgan and Shalen in \cite{morgan_shalen} and by Goldman in his article \cite{goldman_topological_1988}.

\subsection{Real forms and definition}
 Let us first recall the classification of the real forms of $\slnc$. For a detailed exposition of the results that we state, see the book of Helgason \cite{helgason_geometric_2008}. Recall that a real form of a complex Lie group $G_\mathbb{C}$ is a real Lie group  $G_\mathbb{R}$ such that $G_\mathbb{C} = \mathbb{C} \otimes_\mathbb{R} G_\mathbb{R}$.
 The real forms of $\slnc$ belong to three families: the real groups $\slnr$, the unitary groups $\mathrm{SU}(p,q)$ and the quaternion groups $\mathrm{SL}_{n/2}(\mathbb{H})$. We give the definitions of the two last families in order to fix the notation.

\begin{defn}
Let $n \in \mathbb{N}$ and $p,q \in \mathbb{N}$ such that $n = p+q$. Denote by $I_{p,q}$ the block matrix:
 \[I_{p,q} = \begin{pmatrix}
 I_{p} & 0 \\
 0 & -I_{q}
\end{pmatrix}\] 

We define the group $\mathrm{SU}(p,q)$ as follows:
\[\mathrm{SU}(p,q) = \{M \in \slnc \mid {}^t\!\con{M} I_{p,q} M = I_{p,q} \} .\]
 It is a real Lie group, which is a real form of $\slnc$.
\end{defn}

\begin{defn}
 Let $n \in \mathbb{N}$. Denote by $J_{2n}$ the block matrix:
 \[J_{2n} = \begin{pmatrix}
 0 & I_n \\
 -I_n & 0
\end{pmatrix}\] 

We define the group $\mathrm{SL}_{n}(\mathbb{H})$, also noted $\mathrm{SU}^*(n)$ as follows:
\[\mathrm{SL}_{n}(\mathbb{H}) = \{M \in \mathrm{SL}_{2n}(\mathbb{C}) \mid \con{M}^{-1} J_{2n} M = J_{2n} \} .\]
  It is a real Lie group, which is a real form of $\mathrm{SL}_{2n}(\mathbb{C})$.
\end{defn}

 In order to study representations taking values in real forms, we consider the following definition of character variety for a real form:

\begin{defn}
Let $G$ be a real form of $\slnc$. Let $\Gamma$ be a finitely generated group. We call the $G$-character variety of $\Gamma$ the image of the map $\mathrm{Hom}(\Gamma , G) \rightarrow \mathcal{X}_{\mathrm{SL}_n(\mathbb{C})}(\Gamma)$. In this way,
 
 \[\mathcal{X}_G(\Gamma) = \{ \chi \in \mathcal{X}_{\mathrm{SL}_n(\mathbb{C})} \mid \exists \rho \in  \mathrm{Hom}(\Gamma, G) , \chi = \chi_\rho \}.\]
\end{defn}

 \begin{rem}
 The set $\mathcal{X}_G(\Gamma)$ given by this definition is a subset of a complex algebraic set, which it is not, a priori, a real nor a complex algebraic set. It is the image of a real algebraic set by a polynomial map, and hence a semi-algebraic set. The definition might seem strange if compared to the one for the $\slnc$-character variety. This is due to the fact that the real forms of $\slnc$ are real algebraic groups but not complex algebraic groups and that the algebraic construction and the GIT quotient do not work properly when the field is not algebraically closed. Nevertheless, when considering the compact real form $\mathrm{SU}(n)$, it is possible to define a $\mathrm{SU}(n)$-character variety by considering a topological quotient. We will show, in the next section, that this topological quotient is homeomorphic to the $\mathrm{SU}(n)$-character variety as defined above.
 \end{rem}
 
\subsection{The character variety $\mathcal{X}_{\mathrm{SU}(n)}(\Gamma)$ as a topological quotient}  \label{section_chi_sun}
Let $n$ be a positive integer. We are going to show that the topological quotient $\mathrm{Hom}(\Gamma, \mathrm{SU}(n))/\mathrm{SU}(n)$, where $\mathrm{SU}(n)$ acts by conjugation, is naturally homeomorphic to the character variety $\mathcal{X}_{\mathrm{SU}(n)}(\Gamma)$.
 Let us notice first that a map between these two sets is well defined.
Indeed,
 since two representations taking values in $\mathrm{SU}(n)$ which are conjugate in $\mathrm{SU}(n)$ are also conjugate in $\slnc$, the natural map $\mathrm{Hom}(\Gamma, \mathrm{SU}(n)) \rightarrow \mathcal{X}_{\slnc}(\Gamma)$ factors through the quotient $\mathrm{Hom}(\Gamma, \mathrm{SU}(n)) / \mathrm{SU}(n)$.
% We are going to show that this map is injective, and that it is in fact a homeomorphism between the topological quotient $\mathrm{Hom}(\Gamma, \mathrm{SU}(n))/\mathrm{SU}(n)$ and the character variety $\mathcal{X}_{\mathrm{SU(n)}}(\Gamma)$.
 We are going to show the following proposition:

 \begin{prop} \label{prop_chi_sun}
 The map $\mathrm{Hom}(\Gamma,\mathrm{SU}(n)) /  \mathrm{SU}(n) \rightarrow \mathcal{X}_{\mathrm{SU}(n)}(\Gamma)$ is a homeomorphism.
\end{prop}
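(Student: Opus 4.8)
The plan is to show that the natural continuous bijection
\[
\mathrm{Hom}(\Gamma,\mathrm{SU}(n))/\mathrm{SU}(n) \longrightarrow \mathcal{X}_{\mathrm{SU}(n)}(\Gamma)
\]
is a homeomorphism by arguing that it is a continuous bijection from a compact space to a Hausdorff space. First I would check that the map is well defined and continuous: it is induced by the restriction to $\mathrm{Hom}(\Gamma,\mathrm{SU}(n))$ of the polynomial quotient map $\mathrm{Hom}(\Gamma,\slnc)\to\mathcal{X}_{\slnc}(\Gamma)$, which is continuous for the usual (analytic) topologies, and it factors through the quotient by $\mathrm{SU}(n)$-conjugation as already noted in the excerpt; by definition of $\mathcal{X}_{\mathrm{SU}(n)}(\Gamma)$ as the image, the induced map is surjective.

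Next I would establish injectivity. Suppose two representations $\rho,\rho'\in\mathrm{Hom}(\Gamma,\mathrm{SU}(n))$ have the same image in $\mathcal{X}_{\slnc}(\Gamma)$, i.e.\ $\chi_\rho=\chi_{\rho'}$. Since $\mathrm{SU}(n)$ is compact, every finite-dimensional representation of $\Gamma$ into $\mathrm{SU}(n)$ is unitary, hence completely reducible, hence semi-simple as an $\slnc$-representation. By Theorem \ref{thm_semisimple_char}, $\rho$ and $\rho'$ are conjugate in $\slnc$. The remaining point is to upgrade this to conjugacy inside $\mathrm{SU}(n)$: if $g\in\slnc$ satisfies $g\rho g^{-1}=\rho'$, then both the standard Hermitian form and its $\rho$-translate are invariant, and a polar-decomposition / averaging argument (or the uniqueness up to scalar of invariant Hermitian forms on each isotypic block, together with positivity) shows that $g$ can be replaced by an element of $\mathrm{SU}(n)$. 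This gives injectivity of the map on the quotient.

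Finally I would invoke the topology. The set $\mathrm{Hom}(\Gamma,\mathrm{SU}(n))$ is a closed subset of $\mathrm{SU}(n)^k$ (for a finite generating set of size $k$), hence compact, so its continuous image $\mathrm{Hom}(\Gamma,\mathrm{SU}(n))/\mathrm{SU}(n)$ is compact. On the other hand $\mathcal{X}_{\mathrm{SU}(n)}(\Gamma)\subseteq\mathcal{X}_{\slnc}(\Gamma)$ is a subspace of an affine complex algebraic set with its Hausdorff analytic topology, hence Hausdorff. A continuous bijection from a compact space to a Hausdorff space is a homeomorphism, which concludes the proof.

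The main obstacle I expect is the upgrade from $\slnc$-conjugacy to $\mathrm{SU}(n)$-conjugacy: one must carefully argue that the intertwiner $g$, which a priori lies only in $\slnc$, can be chosen unitary — handling the reducible case requires decomposing into isotypic components and checking that the relevant invariant Hermitian forms are definite and unique up to scaling on each block, so that $g$ is a product of a unitary matrix with a positive-definite one commuting with $\rho$, and the positive factor can be absorbed. Everything else (continuity, surjectivity, the compact-to-Hausdorff topology argument) is routine.
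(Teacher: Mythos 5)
Your proposal is correct, and its skeleton coincides with the paper's: the map is continuous and surjective by construction of $\mathcal{X}_{\mathrm{SU}(n)}(\Gamma)$, the source is compact and the target is a Hausdorff subset of $\mathbb{C}^m$, so everything reduces to injectivity, which you obtain—exactly as the paper does via Lemma \ref{lemme_chi_sun}—from semi-simplicity of unitary representations, Theorem \ref{thm_semisimple_char}, and an upgrade from $\slnc$-conjugacy to $\mathrm{SU}(n)$-conjugacy. The only point of genuine divergence is how that upgrade (the paper's Lemma \ref{lemme_con_slnc_con_sun}) is carried out. The paper first treats irreducible representations, using that an irreducible unitary representation preserves a unique Hermitian form up to a real scalar (positive by definiteness, so one can rescale the intertwiner), and then handles the general case by decomposing $\mathbb{C}^n$ into an orthogonal sum of irreducible stable subspaces and conjugating block by block. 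Your polar-decomposition suggestion gives a uniform argument with no case split: if $g\rho g^{-1}=\rho'$ with $\rho,\rho'$ unitary, then taking conjugate transposes shows that ${}^t\con{g}\,g$ commutes with the image of $\rho$, hence so does its positive square root $p$; writing $g=up$ with $u$ unitary, $u$ already conjugates $\rho$ to $\rho'$, and multiplying $u$ by an $n$-th root of $\det(u)^{-1}$ puts it in $\mathrm{SU}(n)$. This route is a bit more elementary (no appeal to irreducibility or uniqueness of invariant forms), while the paper's route makes the invariant Hermitian form explicit; your alternative isotypic-block phrasing is essentially the paper's argument. In a final write-up this upgrade step should be written out in full rather than left as a sketch, since it is the only non-routine point of the proof.
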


 \begin{proof}
  We consider $\mathcal{X}_{\mathrm{SU}(n)}(\Gamma)$ as a subset of $\mathcal{X}_{\slnc}(\Gamma) \subset \mathbb{C}^m$, endowed with the usual topology of $\mathbb{C}^m$. By definition, we know that the map $\mathrm{Hom}(\Gamma,\mathrm{SU}(n)) /  \mathrm{SU}(n) \rightarrow \mathcal{X}_{\mathrm{SU}(n)}(\Gamma)$ is continuous and surjective. Since a continuous bijection between a compact space and a Hausdorff space is a homeomorphism, it is enough to show that the map is injective. We want to show that if $\rho_1, \rho_2 \in \mathrm{Hom}(\Gamma, \mathrm{SU}(n))$ are representations such that $\chi_{\rho_1} = \chi_{\rho_2}$, then $\rho_1$ and $\rho_2$ are conjugate in $\mathrm{SU}(n)$. It is the statement of Lemma \ref{lemme_chi_sun}, that we prove below.
 \end{proof}
 
 In order to prove proposition \ref{prop_chi_sun}, we are going to show the following lemma, which seems standard despite the lack of references. 
 
 \begin{lemme}\label{lemme_con_slnc_con_sun}
 Let $\rho_1,\rho_2 \in \mathrm{Hom}(\Gamma, \mathrm{SU}(n))$. If they are conjugate in $\slnc$, then they are conjugate in $\mathrm{SU}(n)$.
\end{lemme}

\begin{proof}
 Let us deal first with the irreducible case, and treat the general case after that.\\
\emph{First case: The representations $\rho_1$ and $\rho_2$ are irreducible.}
 Let $G\in \slnc$ such that $\rho_2 = G\rho_1G^{-1}$.
 Let $J$ be the matrix of the Hermitian form $\sum_{i=1}^{n} \con{x_i}y_i$, which is preserved by the images of $\rho_1$ and $\rho_2$. Since $\rho_2 = G\rho_1G^{-1}$, we know that the image of $\rho_1$ also preserves the form $\!^{t}GJG$. But $\rho_1$ is irreducible: its image preserves then a unique Hermitian form up to a scalar. We deduce that $J = \lambda \!^{t}GJG$ with $\lambda \in \mathbb{R}$.
  Since $J$ is positive definite, we have $\lambda > 0$, and, by replacing $G$ by $\sqrt{\lambda} G$, we have $J = \!^{t}GJG$ i.e. that $G \in \mathrm{SU}(n)$.

\emph{General case.}
 Recall first that every representation $\rho \in \mathrm{Hom}(\Gamma, \mathrm{SU}(n))$ is semi-simple, since its stable subspaces are in an orthogonal sum.

 Let $G \in \slnc$ such that $\rho_2 = G\rho_1G^{-1}$. Since $\rho_1$ takes values in $\mathrm{SU}(n)$, we know that it is semi-simple. It can then be written as a direct sum of irreducible representations: $\rho_1 = \rho_1^{(1)} \oplus \dots \oplus \rho_1^{(m)}$, acting on stable subspaces $E_1, \dots ,E_m$ of $\mathbb{C}^n$, in such a way that the image of $\rho_i$ acts irreducibly on $E_i$ and that $E_i$ are in a direct orthogonal sum. The same applies to $\rho_2$, which admits as stable subspaces $GE_1, \dots , GE_m$. The direct sum $GE_1 \oplus \dots \oplus GE_m$ is then orthogonal. Hence, there exists $U_0 \in \mathrm{SU}(n)$ such that, for all $i \in \{1,\dots , m \}$ we have $U_0G E_i = E_i$. Maybe after conjugating $\rho_2$ by $U_0$, we can suppose that for all $i \in \{1,\dots , m \}$ we have $GE_i = E_i$. The linear map $G$ can then be written $G_1 \oplus \dots \oplus G_m$, acting on $E_1 \oplus \cdots \oplus E_m$. For each $i\in \{1,\dots , m \}$, since $\rho_1^{(i)}$ and $G_i\rho_1^{(i)}G_i^{-1}$ are unitary and irreducible on $E_i$, they are conjugate in $SU(E_i)$ by the first case. We can then replace $G_i$ by $G'_i \in \mathrm{SU}(E_i)$. Setting $G' = G'_1 \oplus \cdots \oplus G'_m$, we have $G' \in \mathrm{SU}(n)$ and $\rho_2 = G'\rho_1G'^{-1}$. 
\end{proof}

%\begin{rem}
% This lemma is stated by Florentino and Lawton in \cite{florentino_topology_2009} (Lemma 5.1, proved in the appendix). The proof given in the article is not complete: it does not work for example when $k_1 = k'_1 = \mathrm{id}$.
%\end{rem}

 Thanks to Lemma \ref{lemme_con_slnc_con_sun}, we can show the following lemma, which finishes the proof of Proposition \ref{prop_chi_sun} and ensures that $\mathrm{Hom}(\Gamma, \mathrm{SU}(n))/\mathrm{SU}(n) $ and $ \mathcal{X}_{\slnc}(\Gamma)$ are homeomorphic.

\begin{lemme}\label{lemme_chi_sun}
 Let $\rho_1, \rho_2 \in \mathrm{Hom}(\Gamma, \mathrm{SU}(n))$ such that $\chi_{\rho_1} = \chi_{\rho_2}$. Then $\rho_1$ and $\rho_2$ are conjugate in $\mathrm{SU}(n)$.
\end{lemme}

\begin{proof}
 Since $\rho_1$ and $\rho_2$ take values in $\mathrm{SU}(n)$, they are are semi-simple. By Theorem \ref{thm_semisimple_char}, since $\chi_{\rho_1} = \chi_{\rho_2}$ and $\rho_1$ and $\rho_2$ are semi-simple, we know that $\rho_1$ and $\rho_2$ are conjugate in $\slnc$. We deduce, thanks to Lemma \ref{prop_chi_sun}, that $\rho_1$ and $\rho_2$ are conjugate in $\mathrm{SU}(n)$.
\end{proof}

\subsection{ Anti-holomorphic involutions and irreducible representations}
 In this section, we find the locus of character varieties for the real forms of $\slnc$ inside the $\slnc$-character variety $\mathcal{X}_{\slnc}(\Gamma)$. Before focusing on irreducible representations, we show the following proposition, which ensures that two character varieties for two different unitary real forms intersect only in points which correspond to reducible representations. 
 
\begin{prop} \label{prop_inter_charvar_reelle}
 Let $n \in \mathbb{N}$, and $p ,p',q,q' \in \mathbb{N}$ such that $p+q = p'+q' = n$ and $p \neq p', q'$. Let $\rho \in \mathrm{Hom}(\Gamma, \slnc)$ such that $\chi_\rho \in \mathcal{X}_{\mathrm{SU}(p,q)} \cap \mathcal{X}_{\mathrm{SU}(p',q')}$. Then $\rho$ is a reducible representation.
\end{prop}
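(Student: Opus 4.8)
The plan is to proceed by contraposition: assume $\rho$ is irreducible and derive $p \in \{p', q'\}$, contradicting the hypothesis. The key tool will be the uniqueness (up to a real scalar) of an invariant Hermitian form for an irreducible representation, exactly as used in the proof of Lemma~\ref{lemme_con_slnc_con_sun}.

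**First I would** set up the two invariant Hermitian forms. Since $\chi_\rho \in \mathcal{X}_{\mathrm{SU}(p,q)}$, after replacing $\rho$ by a conjugate there is a representation $\rho_1$, conjugate to $\rho$, whose image preserves the Hermitian form with matrix $I_{p,q}$; likewise there is $\rho_2$ conjugate to $\rho$ preserving $I_{p',q'}$. Because $\rho$ is irreducible and $\chi_{\rho_1} = \chi_{\rho_2} = \chi_\rho$, the semi-simplicity criterion (Theorem~\ref{thm_semisimple_char}) gives $G \in \slnc$ with $\rho_2 = G\rho_1 G^{-1}$. Then the image of $\rho_1$ preserves both $I_{p,q}$ and ${}^t\!\con{G} I_{p',q'} G$. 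By irreducibility the space of invariant Hermitian forms is one-dimensional over $\mathbb{R}$, so ${}^t\!\con{G} I_{p',q'} G = \lambda\, I_{p,q}$ for some $\lambda \in \mathbb{R}^*$.

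**Next I would** read off the signatures. The matrix $I_{p,q}$ has signature $(p,q)$ and the congruent matrix ${}^t\!\con{G} I_{p',q'} G$ has signature $(p',q')$ by Sylvester's law of inertia. If $\lambda > 0$ then $\lambda I_{p,q}$ has signature $(p,q)$, forcing $(p,q) = (p',q')$ and in particular $p = p'$. If $\lambda < 0$ then $\lambda I_{p,q}$ has signature $(q,p)$, forcing $(p,q) = (q',p')$ and in particular $p = q'$. Either way $p \in \{p', q'\}$, contradicting the hypothesis $p \neq p', q'$. Hence $\rho$ must be reducible.

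**The main obstacle**—and the only genuinely nontrivial input—is the uniqueness of the invariant Hermitian form for an irreducible representation, i.e.\ that the real vector space of $\rho_1(\Gamma)$-invariant Hermitian forms is at most one-dimensional. This follows from a Schur-type argument: if $H_1, H_2$ are two such forms with $H_1$ nondegenerate, then $H_1^{-1} H_2$ commutes with all of $\rho_1(\Gamma)$, hence (irreducibility, over $\mathbb{C}$) is a scalar $\mu \in \mathbb{C}$; comparing $H_2 = \mu H_1$ with the Hermitian symmetry of both forms forces $\mu \in \mathbb{R}$. I would invoke this exactly as in the proof of Lemma~\ref{lemme_con_slnc_con_sun}, where the same fact is used; everything else is a routine signature bookkeeping via Sylvester's law of inertia.
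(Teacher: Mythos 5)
Your proof is correct and follows essentially the same route as the paper: use irreducibility to reduce to two invariant Hermitian forms, apply Schur's lemma to see they are proportional by a real scalar, and compare signatures via Sylvester's law of inertia to force $(p',q')=(p,q)$ or $(q,p)$. The paper phrases the Schur step as the matrix $(J_{p,q})^{-1}J'_{p',q'}$ commuting with the image of $\rho$, which is exactly your uniqueness-of-the-invariant-form argument.
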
 
\begin{proof}
 Suppose that $\rho$ is irreducible. It is then, up to conjugacy, the only representation of character  $\chi_\rho$. Since $\chi_\rho \in \mathcal{X}_{\mathrm{SU}(p,q)}$, we can suppose that $\rho$ takes values in $\mathrm{SU}(p,q)$. Then, for every $g \in \Gamma$, we have ${}^t\!\con{\rho(g)} J_{p,q} \rho(g) = J_{p,q}$. On the other hand, let us assume that $\rho$ is conjugate to a representation taking values in $\mathrm{SU}(p',q')$. Hence there exists a matrix $J'_{p',q'}$, conjugated to $J_{p',q'}$, such that, for every $g \in \Gamma$, we have ${}^t\!\con{\rho(g)} J'_{p',q'} \rho(g) = J'_{p',q'}$. We deduce that, for every $g \in \Gamma$,
 \[J'_{p',q'} \rho(g)(J'_{p',q'})^{-1} = {}^t\!\con{\rho(g)}^{-1} = J_{p,q} \rho(g)( J_{p,q})^{-1} .\]
The matrix  $(J_{p,q})^{-1} J'_{p',q'}$ commutes with the whole image of $\Gamma$. Since $\rho$ is irreducible, it is a scalar matrix. We deduce that $J_{p,q}$ has either the same signature as $J_{p',q'}$, or the opposite signature. Hence $(p',q')=(p,q)$ or $(p',q')=(q,p)$. %which is a contradiction.
 
\end{proof}

 From now on, we will limit ourselves to irreducible representations and will consider two anti-holomorphic involutions of the $\slnc$-character variety, which induce anti-holomorphic involutions on the character variety.

%\begin{notat}
We will denote by $\phi_1$ and $\phi_2$ two anti-holomorphic automorphisms of the group $\slnc$, given by $\phi_1(A) = \con{A}$ and $\phi_2(A) = \con{{}^t\!A^{-1}}$. These two involutions induce anti-holomorphic involutions $\Phi_1$ and $\Phi_2$ on the representation variety $\mathrm{Hom}(\Gamma, \slnc)$, in such a way that, for a representation $\rho$, we have $\Phi_1(\rho) = \phi_1 \circ \rho$ and $\Phi_2(\rho) = \phi_2 \circ \rho$.
%\end{notat} 
 
 \begin{prop}
 The involutions $\Phi_1$ and $\Phi_2$ induce as well  anti-holomorphic involutions on the character variety $\mathcal{X}_{\mathrm{\slnc}} (\Gamma)$.
\end{prop}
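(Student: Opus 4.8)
The plan is to work with the concrete realization of $\mathcal{X}_{\slnc}(\Gamma)$ as the image of the trace map $\tau = (\tau_{\gamma_1}, \dots , \tau_{\gamma_k}) \colon \mathrm{Hom}(\Gamma, \slnc) \to \mathbb{C}^k$ provided by Procesi's theorem. First I would enlarge the finite set $\{\gamma_1, \dots , \gamma_k\}$ so that it is stable under $\gamma \mapsto \gamma^{-1}$; this is harmless, since adding finitely many trace functions still yields an image isomorphic to $\mathcal{X}_{\slnc}(\Gamma)$, and it makes $\gamma \mapsto \gamma^{-1}$ induce a permutation $\sigma$ of the coordinates of $\mathbb{C}^k$. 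Next I would record the two elementary identities $\mathrm{tr}(\con{A}) = \con{\mathrm{tr}(A)}$ and $\mathrm{tr}({}^t\!\con{A}^{-1}) = \con{\mathrm{tr}(A^{-1})}$, valid for every $A \in \slnc$. Applying them to $A = \rho(\gamma_j)$ gives, on the representation variety, $\tau \circ \Phi_1 = c \circ \tau$ and $\tau \circ \Phi_2 = c \circ \sigma \circ \tau$, where $c \colon \mathbb{C}^k \to \mathbb{C}^k$ is componentwise complex conjugation.

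From these relations everything follows quickly. If $\chi_\rho = \chi_{\rho'}$, equivalently $\tau(\rho) = \tau(\rho')$, then $\tau(\Phi_i(\rho)) = \tau(\Phi_i(\rho'))$ for $i = 1,2$, so each $\Phi_i$ descends to a well-defined self-map $\overline{\Phi_i}$ of $\mathcal{X}_{\slnc}(\Gamma) = \tau(\mathrm{Hom}(\Gamma,\slnc))$. Concretely, $\overline{\Phi_1}$ is the restriction to $\mathcal{X}_{\slnc}(\Gamma)$ of $c$, and $\overline{\Phi_2}$ is the restriction of $c \circ \sigma$. Both $c$ and $c \circ \sigma$ are anti-holomorphic automorphisms of $\mathbb{C}^k$ (complex conjugation, possibly composed with a linear coordinate permutation), and each preserves the subset $\mathcal{X}_{\slnc}(\Gamma)$, because $\Phi_1$ and $\Phi_2$ preserve $\mathrm{Hom}(\Gamma,\slnc)$ and $\tau$ is onto the character variety; hence the restrictions are anti-holomorphic. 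Finally $\overline{\Phi_i}$ is an involution, either because $\Phi_i^2 = \mathrm{id}$ on $\mathrm{Hom}(\Gamma,\slnc)$ and $\tau$ is surjective, or directly because $c^2 = \mathrm{id}$, $\sigma^2 = \mathrm{id}$ and $c\sigma = \sigma c$.

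I do not expect a genuine obstacle; the only point needing care is purely formal, namely pinning down the sense in which $\mathcal{X}_{\slnc}(\Gamma)$ carries an anti-holomorphic structure. The cleanest formulation I would adopt is that $\mathcal{X}_{\slnc}(\Gamma)$ is an affine complex algebraic set defined over $\mathbb{R}$ — indeed over $\mathbb{Q}$, since its defining equations and the generating trace functions all have rational coefficients — so that $\overline{\Phi_1}$ is precisely the associated real (Galois) structure and $\overline{\Phi_2}$ is this structure composed with the algebraic automorphism induced by $\gamma \mapsto \gamma^{-1}$. I would also remark that the induced maps do not depend on the chosen generating set, since the definitions of $\Phi_1$ and $\Phi_2$ on $\mathrm{Hom}(\Gamma,\slnc)$ are intrinsic: any two choices of generating trace functions give isomorphic embeddings intertwining $c$, respectively $c \circ \sigma$.
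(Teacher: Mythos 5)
Your proof is correct and follows essentially the same route as the paper: realize $\mathcal{X}_{\slnc}(\Gamma)$ via Procesi's trace functions for a generating set enlarged to be inverse-closed, use $\mathrm{tr}(\con{A}) = \con{\mathrm{tr}(A)}$ and $\mathrm{tr}({}^t\!\con{A}^{-1}) = \con{\mathrm{tr}(A^{-1})}$ to see that $\Phi_1$ and $\Phi_2$ are intertwined with coordinatewise conjugation, respectively conjugation composed with the permutation induced by $\gamma \mapsto \gamma^{-1}$, and conclude that these ambient anti-holomorphic involutions restrict to the character variety. The paper does exactly this with the inverses listed as a second block of coordinates; your extra remarks (independence of the generating set, the real-structure interpretation) are sound but not needed.
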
 
 
 \begin{proof}
  For $\rho \in \mathrm{Hom}(\Gamma, \slnc)$ and $g \in \Gamma$ we have $\mathrm{tr}(\Phi_1(\rho)(g)) = \con{ \mathrm{tr}(\rho(g)) }$ and $\mathrm{tr}(\Phi_2(\rho)(g)) = \con{ \mathrm{tr}(\rho(g^{-1})) }$. Hence $\chi_\rho(\Phi_1(g)) = \con{\chi_\rho(g)}$ and $\chi_\rho(\Phi_2(g)) = \con{\chi_\rho(g^{-1})}$. Let $g_1,\dots , g_m \in \Gamma$ such that $\mathcal{X}_{\mathrm{\slnc}} (\Gamma)$ is isomorphic to the image of $\psi : \mathrm{Hom}(\Gamma,\slnc) \rightarrow \mathbb{C}^{2m}$ given by 
  \[\psi(\rho) = ( \chi_\rho (g_1), \dots , \chi_\rho(g_m), \chi_\rho (g_1^{-1}) , \dots , \chi_\rho (g_1^{-1}) ).\]
  
  If $\psi(\rho) = (z_1, \dots , z_m , w_1 , \dots , w_m) \in \mathbb{C}^{2m}$, then 
 $ \psi(\Phi_1(\rho)) = (\con{z_1}, \dots , \con{z_m} , \con{w_1} , \dots , \con{w_m}) $
  and 
 $ \psi(\Phi_2(\rho)) = (\con{w_1} , \dots , \con{w_m} , \con{z_1}, \dots , \con{z_m})$.
 The anti-holomorhic involutions
 \[(z_1, \dots , z_m , w_1 , \dots , w_m) \mapsto (\con{z_1}, \dots , \con{z_m} , \con{w_1} , \dots , \con{w_m})\]
 and
 \[(z_1, \dots , z_m , w_1 , \dots , w_m) \mapsto (\con{w_1} , \dots , \con{w_m} , \con{z_1}, \dots , \con{z_m})\]
 are hence well defined on $\mathcal{X}_{\mathrm{\slnc}} (\Gamma)$ and induced by $\Phi_1$ and $\Phi_2$ respectively.
\end{proof}

We will still denote these involutions on $\mathcal{X}_{\mathrm{\slnc}} (\Gamma)$ by $\Phi_1$ and $\Phi_2$. 
 We will denote by $\mathrm{Fix}(\Phi_1)$ and $\mathrm{Fix}(\Phi_2)$ the points in $\mathcal{X}_{\mathrm{\slnc}} (\Gamma)$ fixed respectively by $\Phi_1$ and $\Phi_2$.

 The following remark ensures that character varieties for real forms are contained in the set of fixed points of  $\Phi_1$ and $\Phi_2$ in $\mathcal{X}_{\slnc}(\Gamma)$: 
 
 \begin{rem}
  If $\rho \in \mathrm{Hom}(\Gamma, \slnc)$ is conjugate to a representation taking values in $\mathrm{SL}_n(\mathbb{R})$, then $\chi_\rho \in \mathrm{Fix}(\Phi_1)$.
 Furthermore, if is conjugate to a representation taking values in $\mathrm{SL}_{n/2}(\mathbb{H})$, then $\chi_\rho \in \mathrm{Fix}(\Phi_1)$, since a matrix $A \in \mathrm{SL}_{n/2}(\mathbb{H})$ is conjugated to $\con{A}$.
   On the other hand, if $\rho$ is conjugate to a representation taking values in $\mathrm{SU}(p,q)$, then $\chi_\rho \in \mathrm{Fix}(\Phi_2)$. Indeed, if $A$ is a unitary matrix, then it is conjugated to $\con{{}^{t}\!A^{-1}}$.
  
  In this way, $\mathcal{X}_{\slnr} (\Gamma) \subset \mathrm{Fix}(\Phi_1)$, $\mathcal{X}_{\mathrm{SL}_{n/2}(\mathbb{H})} (\Gamma) \subset \mathrm{Fix}(\Phi_1)$ and $\mathcal{X}_{\mathrm{SU}(p,q)} (\Gamma) \subset \mathrm{Fix}(\Phi_2)$.
 \end{rem} 
 
 From now on, we will work in the reciprocal direction. We will show that an irreducible representation with character in $\mathrm{Fix}(\Phi_1)$ or $\mathrm{Fix}(\Phi_2)$ is conjugate to a representation taking values in a real form of $\slnc$. Let us begin with the case of $\mathrm{Fix}(\Phi_2)$, which corresponds to unitary groups. The result is given in the following proposition:

\begin{prop}\label{prop_invol_traces}
  Let $\rho \in \mathrm{Hom}(\Gamma, \slnc)$ be an irreducible representation such that $\chi_\rho \in \mathrm{Fix}(\Phi_2)$. Then there exists $p,q \in \mathbb{N}$ with $n=p+q$ such that $\rho$ is conjugate to a representation taking values in $\mathrm{SU}(p,q)$. 
\end{prop}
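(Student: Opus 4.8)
The plan is to turn the hypothesis into a $\rho$-invariant non-degenerate Hermitian form and then put that form into standard shape.

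\emph{Step 1: produce an invariant sesquilinear form.} Since points of the character variety are characters, the condition $\chi_\rho\in\mathrm{Fix}(\Phi_2)$ says exactly that $\chi_{\phi_2\circ\rho}=\chi_\rho$. I would first observe that $\phi_2\circ\rho$ is again irreducible: it is the composition of the contragredient representation $g\mapsto {}^t\!\rho(g)^{-1}$ (whose invariant subspaces are the annihilators of those of $\rho$) with entrywise conjugation (which carries a complex subspace $V$ to the complex subspace $\con{V}$ of the same dimension), so an invariant subspace of $\phi_2\circ\rho$ would produce one for $\rho$. Hence $\rho$ and $\phi_2\circ\rho$ are semisimple with the same character, and Theorem~\ref{thm_semisimple_char} gives $P\in\slnc$ with $\phi_2\circ\rho=P\rho P^{-1}$. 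Written out, $\bigl({}^t\!\con{\rho(g)}\bigr)^{-1}=P\rho(g)P^{-1}$ for all $g\in\Gamma$, which rearranges to
\[{}^t\!\con{\rho(g)}\,P\,\rho(g)=P\qquad(g\in\Gamma),\]
so $P$ is the matrix of a non-degenerate $\rho$-invariant sesquilinear form.

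\emph{Step 2: make $P$ Hermitian.} Taking conjugate-transpose of the displayed identity shows that ${}^t\!\con{P}$ satisfies the same relation; equivalently, both $P$ and ${}^t\!\con{P}$ intertwine the irreducible representations $\rho$ and $\phi_2\circ\rho$. By Schur's lemma this intertwining space is one-dimensional (it contains the invertible $P$), so ${}^t\!\con{P}=\lambda P$ for some $\lambda\in\mathbb{C}$; conjugate-transposing once more forces $|\lambda|=1$. Replacing $P$ by $\mu P$, where $\mu$ lies on the unit circle with $\mu^2=\lambda$, then makes $P$ Hermitian while keeping it invertible and still satisfying ${}^t\!\con{\rho(g)}\,P\,\rho(g)=P$, as it is a scalar multiple of the original matrix.

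\emph{Step 3: normalise.} Now $P$ is a non-degenerate Hermitian $n\times n$ matrix, hence of some signature $(p,q)$ with $p+q=n$, and Sylvester's law of inertia gives $Q\in\mathrm{GL}_n(\mathbb{C})$ with ${}^t\!\con{Q}\,P\,Q=I_{p,q}$; after rescaling $Q$ by a scalar we may assume $Q\in\slnc$, which merely replaces $I_{p,q}$ by a positive multiple of itself and does not change the map $g\mapsto Q^{-1}\rho(g)Q$. Setting $\rho'=Q^{-1}\rho Q$, which is conjugate to $\rho$ in $\slnc$, substituting into the invariance relation (and using $P=\bigl({}^t\!\con{Q}\bigr)^{-1}I_{p,q}Q^{-1}$) yields ${}^t\!\con{\rho'(g)}\,I_{p,q}\,\rho'(g)=I_{p,q}$ for all $g$, so $\rho'$ takes values in $\mathrm{SU}(p,q)$, as desired.

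I expect the crux to be Step~2: for a representation that is merely semisimple the conjugating matrix $P$ produced in Step~1 need not be Hermitian, and it is exactly the uniqueness up to scalar of the invariant form — i.e. Schur's lemma, used here in the same spirit as in the proof of Lemma~\ref{lemme_con_slnc_con_sun} — that allows us to symmetrise it. Steps~1 and~3 are then just the standard dictionary between invariant Hermitian forms and the defining equations of the groups $\mathrm{SU}(p,q)$.
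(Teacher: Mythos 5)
Your proof is correct and follows essentially the same route as the paper's: produce an intertwiner $P$ between $\rho$ and $\phi_2\circ\rho$, use irreducibility (Schur's lemma, which the paper phrases as ``$P^{-1}\,{}^t\!\con{P}$ commutes with the image, hence is scalar'') to get ${}^t\!\con{P}=\lambda P$ with $|\lambda|=1$ and rescale $P$ to be Hermitian, then conclude that $\rho$ preserves a non-degenerate Hermitian form of some signature $(p,q)$. The only differences are presentational: you make explicit the irreducibility of $\phi_2\circ\rho$ and the final Sylvester normalisation, which the paper leaves implicit.
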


\begin{proof}
 We know that $\chi_\rho \in \mathrm{Fix}(\Phi_2)$, so the representations $\rho$ and $\Phi_2(\rho)$ have the same character. Since $\rho$ is irreducible, $\rho$ and $\Phi_2(\rho)$ are conjugate. Then there exists $P \in \mathrm{GL}_n(\mathbb{C})$ such that, for every $g \in \Gamma$, we have $P\rho(g)P^{-1} = \con{{}^t\!\rho(g)^{-1}}$. By considering the inverse, conjugating and transposing, we obtain, for every $g \in \Gamma$, that \[\con{{}^t\!P^{-1}}\con{{}^t\!\rho(g)^{-1}}\con{{}^t\!P} = \rho(g).\]
  By replacing $\con{{}^t\!\rho(g)^{-1}}$ in the expression, we deduce that
  \[(P^{-1}\con{{}^t\!P})^{-1} \rho(g) (P^{-1}\con{{}^t\!P}) = \rho(g).\]
  The matrix $P^{-1}\con{{}^t\!P}$ commutes to the whole image of $\rho$. But $\rho$ is irreducible, so there exists $\lambda \in \mathbb{C}$ such that $P^{-1}\con{{}^t\!P} = \lambda \mathrm{Id}$. By taking the determinant, we know that $|\lambda| = 1$. Up to multiplying $P$ by a square root of $\lambda$, we can suppose that $\lambda = 1$. We then have that $P = \con{{}^t\!P} $, which means that $P$ is a Hermitian matrix.  
  We have then a Hermitian matrix $P$ such that, for every $g \in \Gamma$, $\con{{}^t\!\rho(g)} P \rho(g) = P$. The representation $\rho$ takes then values in the unitary group of $P$. 
 Denoting by $(p,q)$ the signature of $P$, the representation $\rho$ is then conjugate to a representation taking values in $\mathrm{SU}(p,q)$.
\end{proof}

\begin{rem}
 \begin{enumerate}
  \item When $n=3$, the only possibilities are $\mathrm{SU}(3)$ and $\su21$.
  \item When $n = 2$, the involutions $\Phi_1$ and $\Phi_2$ are equal: we recognize the result shown by Morgan and Shalen in \cite{morgan_shalen} (Proposition III.1.1) and by Goldman in \cite{goldman_topological_1988} (Theorem 4.3), which is that an irreducible representation with real character is conjugate to either a representation with values in $\mathrm{SU}(2)$, or to a representation with values in $\mathrm{SU}(1,1)$ (appearing as $\mathrm{SL}_2(\mathbb{R})$ for Morgan and Shalen and $\mathrm{SO(2,1)}$ for Goldman).
 \end{enumerate}
\end{rem}

 Let us see now the case of $\mathrm{Fix}(\Phi_1)$, which corresponds to representations taking values in $\mathrm{SL}_n(\mathbb{R})$ or $\mathrm{SL}_{n/2}(\mathbb{H})$. The result is given in the following proposition:

\begin{prop}\label{prop_traces_reelles}
   Let $\rho \in \mathrm{Hom}(\Gamma, \slnc)$ be an irreducible representation such that $\chi_\rho \in \mathrm{Fix}(\Phi_1)$. Then $\rho$ is conjugate to either a representation taking values in $\mathrm{SL}_n(\mathbb{R})$, either a representation taking values in $\mathrm{SL}_{n/2}(\mathbb{H})$ (when $n$ is even).
\end{prop}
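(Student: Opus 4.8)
The plan is to follow the strategy of Proposition \ref{prop_invol_traces}, replacing the involution $\phi_2$ by $\phi_1$; the new feature is that the intertwining matrix will now be normalized by a nonzero \emph{real} scalar, and the sign of that scalar will dictate whether we land in $\slnr$ or in $\mathrm{SL}_{n/2}(\mathbb{H})$.

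First I would use that $\chi_\rho \in \mathrm{Fix}(\Phi_1)$, so $\rho$ and $\Phi_1(\rho) = \overline{\rho}$ have the same character; as $\rho$ is irreducible (so is $\overline{\rho}$), Theorem \ref{thm_semisimple_char} gives $P \in \mathrm{GL}_n(\mathbb{C})$ with $P\rho(g)P^{-1} = \overline{\rho(g)}$ for all $g \in \Gamma$. Conjugating this relation entrywise yields $\overline{P}\,\overline{\rho(g)}\,\overline{P}^{-1} = \rho(g)$, and substituting $\overline{\rho(g)} = P\rho(g)P^{-1}$ shows that $\overline{P}P$ commutes with the whole image of $\rho$; by Schur's lemma $\overline{P}P = \lambda\,\mathrm{Id}$ for some $\lambda \in \mathbb{C}^{*}$. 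From $\overline{P}P = \lambda\,\mathrm{Id}$ one gets $\overline{P} = \overline{\lambda}\,P^{-1}$, hence also $\overline{P}P = \overline{\lambda}\,\mathrm{Id}$, so $\lambda \in \mathbb{R}^{*}$. Note that replacing $P$ by $cP$ turns $\lambda$ into $|c|^{2}\lambda$, so the sign of $\lambda$ is an invariant attached to $\rho$, and the two cases below genuinely cannot be merged.

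Next I would introduce the anti-linear map $\sigma \colon \mathbb{C}^{n} \to \mathbb{C}^{n}$, $\sigma(v) = P^{-1}\overline{v}$. The intertwining relation gives $\sigma(\rho(g)v) = \rho(g)\sigma(v)$ for all $g$ and $v$, that is, $\sigma$ commutes with $\rho$, and a direct computation gives $\sigma^{2} = (\overline{P}P)^{-1} = \lambda^{-1}\,\mathrm{Id}$. Rescaling $\sigma$ by $\sqrt{|\lambda|} \in \mathbb{R}^{*}$ produces an anti-linear map commuting with $\rho$ whose square is $\mathrm{sgn}(\lambda)\,\mathrm{Id}$. If $\lambda > 0$ this is an anti-linear involution, hence a real structure on $\mathbb{C}^n$; its fixed subspace $V$ is a totally real $\rho$-stable subspace with $\mathbb{C}^{n} = V \oplus iV$, and writing $\rho$ in an $\mathbb{R}$-basis of $V$ (which is a $\mathbb{C}$-basis of $\mathbb{C}^{n}$) shows $\rho$ is conjugate to a representation with values in $\mathrm{GL}_{n}(\mathbb{R})$, hence in $\slnr$ since $\det \rho(g) = 1$. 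If $\lambda < 0$ we obtain instead an anti-linear $\tau$ with $\tau^{2} = -\mathrm{Id}$ commuting with $\rho$, i.e.\ a quaternionic structure; this forces $n$ even, makes $\mathbb{C}^{n}$ a free right $\mathbb{H}$-module of rank $n/2$ (with $i$ and $\tau$ realizing two anticommuting quaternion units) and each $\rho(g)$ an $\mathbb{H}$-linear automorphism. After conjugating $\tau$ to the standard quaternionic structure $v \mapsto J_{n}\overline{v}$ — all quaternionic structures on $\mathbb{C}^{n}$ being $\mathrm{GL}_{n}(\mathbb{C})$-conjugate — commuting with $\tau$ becomes the equation $\overline{\rho(g)}^{-1}J_{n}\rho(g) = J_{n}$, so $\rho$ is conjugate to a representation with values in $\mathrm{SL}_{n/2}(\mathbb{H})$.

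The routine parts are the matrix manipulations giving $\overline{P}P = \lambda\,\mathrm{Id}$ with $\lambda$ real and the check that $\sigma$ intertwines $\rho$. The step that requires genuine care, and is the only real novelty with respect to Proposition \ref{prop_invol_traces}, is the dichotomy governed by $\mathrm{sgn}(\lambda)$: one must recognize $\sigma^{2} = \pm\mathrm{Id}$ as a real, respectively a quaternionic, structure commuting with $\rho$, and then translate the quaternionic case into membership — after conjugation — in $\mathrm{SL}_{n/2}(\mathbb{H})$ as defined above, in particular verifying that the model $v \mapsto J_{n}\overline{v}$ reproduces exactly the defining equation $\overline{M}^{-1}J_{n}M = J_{n}$.
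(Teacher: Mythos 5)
Your argument is correct and follows essentially the same route as the paper: from $\chi_\rho\in\mathrm{Fix}(\Phi_1)$ and irreducibility you get an intertwiner $P$ with $\overline{P}P=\lambda\,\mathrm{Id}$, $\lambda$ real, and the sign of $\lambda$ (made invariant by your rescaling remark, or by normalizing $P\in\mathrm{SL}_n(\mathbb{C})$ as the paper does to force $\lambda=\pm1$) separates the $\mathrm{SL}_n(\mathbb{R})$ case from the $\mathrm{SL}_{n/2}(\mathbb{H})$ case. The only difference is presentational: where you invoke the standard classification of real and quaternionic structures (every anti-linear involution is conjugate to $v\mapsto\overline{v}$, every anti-linear map squaring to $-\mathrm{Id}$ is conjugate to $v\mapsto J_{2m}\overline{v}$), the paper establishes exactly these normal forms by elementary computation in Lemmas \ref{lemme_p_pconj} and \ref{lemme_p_pconj_bis}.
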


We are going to give a proof of this statement inspired from the proof of Proposition \ref{prop_invol_traces}.  An alternative proof can be done by adapting the proof given by Morgan and Shalen in the third part of their article \cite{morgan_shalen} for the $\mathrm{SL}_2(\mathbb{C})$ case.

\begin{lemme}\label{lemme_p_pconj}
 Let $P \in \slnc$ such that $\con{P} P = \mathrm{Id}$. Then, there exists $Q \in \mathrm{GL}_n(\mathbb{C})$ such that $P = \con{Q} Q^{-1}$.
\end{lemme} 
 This fact is an immediate consequence of Hilbert's Theorem 90, which says that $H^1(\mathrm{Gal}(\mathbb{C} / \mathbb{R}) , \slnc)$ is trivial. We give here an elementary proof.
 \begin{proof}
  We search $Q$ of the form $Q_{\alpha} = \alpha \mathrm{Id} + \con{\alpha} \con{P}$. Those matrices satisfy trivially $Q_{\alpha}P = \con{Q_{\alpha}}$. It is then sufficient to find $\alpha \in \mathbb{C}$ such that $\det (Q_{\alpha}) \neq 0$. But $\det (Q_{\alpha}) = \con{\alpha}^n \det (\con{P} + \frac{\alpha}{\con{\alpha}} \mathrm{Id})$, so any $\alpha$ such that $-\frac{\alpha}{\con{\alpha}}$ is not an eigenvalue of $\con{P}$ works.
 \end{proof}
 
 \begin{lemme}\label{lemme_p_pconj_bis}
  Let $P \in \mathrm{SL}_{2m}(\mathbb{C})$ such that $\con{P} P = - \mathrm{Id}$. Then there exists $Q \in \mathrm{GL}_{2m}(\mathbb{C})$ such that $P = \con{Q} J_{2m} Q^{-1}$. 
 \end{lemme}
 \begin{proof}
  We search $Q$ of the form $Q_{\alpha} = -\alpha \mathrm{Id} -  \con{\alpha} J_{2m} \con{P}$. Those matrices satisfy trivially $Q_{\alpha}P = \alpha P + \con{\alpha} J_{2m} = J_{2m} \con{Q_\alpha}$. It is then sufficient to find $\alpha \in \mathbb{C}$ such that $\det (Q_{\alpha}) \neq 0$. But $\det (Q_{\alpha}) = \con{\alpha}^{2n} \det (J_{2m} \con{P} - \frac{\alpha}{\con{\alpha}} \mathrm{Id})$, so any $\alpha$ such that $\frac{\alpha}{\con{\alpha}}$ is not an eigenvalue of $ J_{2m} \con{P}$ works.
 \end{proof}

  \begin{proof}[Proof of Proposition \ref{prop_traces_reelles}]
 We know that $\chi_\rho \in \mathrm{Fix}(\Phi_1)$, so the representations $\rho$ and $\Phi_1(\rho)$ have the same character. Since $\rho$ is irreducible, $\rho$ and $\Phi_1(\rho)$ are conjugate. Hence, there exists $P \in \slnc$ such that, for all $g \in \Gamma$, we have $P\rho(g)P^{-1} = \con{\rho(g)}$. By taking the complex conjugation, we obtain $\con{P}\con{\rho(g)} \con{P^{-1}} = \rho(g)$. By replacing $\con{\rho(g)}$ in the expression, we deduce that for all $g \in \Gamma$:
 \[ (\con{P} P) \rho(g) (\con{P} P)^{-1} = \rho(g).\]
 
  The matrix $\con{P} P$ commutes to the whole image of $\rho$. But $\rho$ is irreducible, so here exists $\lambda \in \mathbb{C}$ such that $\con{P} P = \lambda \mathrm{Id}$. In particular, $P$ and $\con{P}$ commute, so, by conjugating the equality above, we have $\lambda \in \mathbb{R}$. Furthermore, by taking the determinant, we have $\lambda^n = 1$, hence $\lambda = \pm 1$ and $P \con{P} = \pm \mathrm{Id}$. We have two cases:
  
   \emph{First case: $P \con{P} = \mathrm{Id}$. }
 By Lemma \ref{lemme_p_pconj}, there exists $Q \in \slnc$ such that $P = \con{Q}Q^{-1}$.
  
  We deduce that for all $g \in \Gamma$:
   \[\con{Q}Q^{-1} \rho(g) Q \con{Q^{-1}}= \con{\rho(g)}\]
   \[Q^{-1} \rho(g) Q  =  \con{Q^{-1}} \con{\rho(g)} \con{Q} \]
   
This means that the representation $Q^{-1} \rho Q$ takes values in $\mathrm{SL}_n(\mathbb{R})$.

\emph{Second case $P \con{P} = -\mathrm{Id}$.}
By taking the determinant, we see that this case can only happen if $n$ is even. Let $m = \frac{n}{2}$.
 By Lemma \ref{lemme_p_pconj_bis}, there exists $Q \in \mathrm{GL}_{2m}(\mathbb{C})$ such that $P = \con{Q} J_{2m} Q^{-1}$.
We deduce that, for all $g \in \Gamma$:
 \begin{eqnarray*}
 \con{Q} J_{2m} Q^{-1} \rho(g) Q J_{2m}^{-1} \con{Q^{-1}} &=&  \con{\rho(g)}
  \\
 \con{\rho(g)^{-1}} \con{Q} J_{2m} Q^{-1} \rho(g) Q J_{2m} \con{Q^{-1}} &=& \mathrm{Id} 
  \\
 (\con{Q^{-1} \rho(g) Q})^{-1} J_{2m} Q^{-1} \rho(g) Q J_{2m}^{-1} &=&  \mathrm{Id}
 \\
 (\con{Q^{-1} \rho(g) Q})^{-1} J_{2m} Q^{-1} \rho(g) Q  &=& J_{2m}.
 \end{eqnarray*}

   This means that the representation $\con{Q^{-1}} \rho \con{Q}$ takes values in $\mathrm{SL}_m(\mathbb{H})$. 
 \end{proof}
 
 With the propositions below, we showed that an irreducible representation with character in $\mathrm{Fix}(\Phi_1)$ or $\mathrm{Fix}(\Phi_2)$ is conjugate to a representation taking values in a real form of $\slnc$. By combining Propositions \ref{prop_invol_traces} and \ref{prop_traces_reelles} we obtain immediately a proof of Theorem \ref{main_thm}.
 \section{A detailed example: the free product $\z3z3$}

 We are going to study in detail the character varieties $\mathcal{X}_{\su21}(\z3z3)$ and  $\mathcal{X}_{\mathrm{SU}(3)}(\z3z3)$.
 We will begin by studying the character variety $\mathcal{X}_{\sl3c}(\z3z3)$ inside the variety $\mathcal{X}_{\sl3c}(F_2)$ given by Lawton in \cite{lawton_generators_2007}. We will then focus on the fixed points of the involution $\mathrm{Fix}(\Phi_2)$, that will give us the two character varieties with values in real forms, and we will finally describe them in detail and find the slices parametrized by Parker and Will in \cite{ParkerWill} and by Falbel, Guilloux, Koseleff, Rouillier and Thistlethwaite in \cite{character_sl3c}.
 
 \subsection{The character variety $\mathcal{X}_{\sl3c}(\z3z3)$}
 In this section, we will study the character variety $\mathcal{X}_{\sl3c}(\z3z3)$. First, notice that $\z3z3$ is a quotient of the free group of rank two $F_2$. Thanks to remark \ref{git+fonctoriel}, we are going to identify $\mathcal{X}_{\sl3c}(\z3z3)$ as a subset of $\mathcal{X}_{\sl3c}(F_2) \subset \mathbb{C}^9$. Let us begin by making some elementary remarks on order 3 elements of $\sl3c$.
 
 \begin{rem}
 \begin{itemize}
 \item If $S \in \sl3c$, then the characteristic polynomial of $S$ is $\chi_S = X^3 - \mathrm{tr}(S)X^2 + \mathrm{tr}(S^{-1})X -1$.
  \item If $S \in \sl3c$ is of order 3, then $S^3 - \mathrm{Id} = 0$. Hence the matrix $S$ is diagonalizable and admits as eigenvalues cube roots of 1. We will denote this cube roots by $1$, $\omega$ and $\omega^2$.
  \end{itemize}
 \end{rem} 
 
 The following elementary lemma will be useful to separate the irreducible components of $\mathcal{X}_{\sl3c}(\z3z3)$.
 
 \begin{lemme}\label{lemme_ordre3_sl3c}
  Let $S \in \sl3c$. The following assertions are equivalent:
   \begin{enumerate}
    \item $S^3 = \mathrm{Id}$
    \item One of the following cases holds:
		\begin{enumerate}
			\item There exists $i \in \{0,1,2\}$ such that $S = \omega^i \mathrm{Id}$.
			\item $\mathrm{tr}(S) = \mathrm{tr}(S^{-1}) = 0$.
		\end{enumerate}		    
   \end{enumerate}
 \end{lemme}
 \begin{proof} \hspace{1cm} 
  \begin{itemize}
  	\item[$(a) \Rightarrow (1)$]: Trivial
  	\item[$(b) \Rightarrow (1)$]: In this case, $\chi_S = X^3 - 1$. By Cayley-Hamilton theorem, we have $S^3 - \mathrm{Id} = 0$.
  	\item[$(1) \Rightarrow (2)$]: If $S^3 = \mathrm{Id}$, then $S$ is diagonalizable and its eigenvalues are cube roots of one. If $S$ has a triple eigenvalue, we are in case $(a)$. If not, since $\det(S) = 1$, the three eigenvalues are different and equal to $(1,\omega,\omega^2)$. We deduce that $\mathrm{tr}(S) = \mathrm{tr}(S^{-1}) = 1 + \omega + \omega^2 = 0 $.
  \end{itemize}

 \end{proof}
 
 We can now identify the irreducible components of
 $\mathcal{X}_{\sl3c}(\z3z3)$, thanks to the following proposition:
 
 \begin{prop}
  The algebraic set $\mathcal{X}_{\sl3c}(\z3z3)$ has 16 irreducible components : 15 isolated points and an irreducible component $X_0$ of complex dimension 4.
 \end{prop}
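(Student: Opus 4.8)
The plan is to reduce the whole computation to an analysis of pairs of order‑$3$ matrices and then pass to the GIT quotient. First I would use that $\z3z3=\langle s,t\mid s^3=t^3=1\rangle$ is a free product, so that $\mathrm{Hom}(\z3z3,\sl3c)$ is exactly $V\times V$, where $V=\{S\in\sl3c\mid S^3=\mathrm{Id}\}$. By Lemma \ref{lemme_ordre3_sl3c}, $V$ is the disjoint union of the three scalar matrices $\mathrm{Id},\omega\mathrm{Id},\omega^2\mathrm{Id}$ and the conjugacy class $C$ of $\mathrm{diag}(1,\omega,\omega^2)$; moreover $C=V\cap\{\mathrm{tr}=0\}$ is closed in $V$, and, being the orbit of a regular semisimple element under the connected group $\sl3c$, it is irreducible of dimension $\dim\sl3c-\dim(\text{maximal torus})=8-2=6$. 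Hence $V$ has exactly four irreducible components and $V\times V$ has sixteen: the piece $C\times C$ of dimension $12$, the six ``mixed'' pieces $C\times\{\omega^i\mathrm{Id}\}$ and $\{\omega^i\mathrm{Id}\}\times C$ of dimension $6$, and the nine ``scalar'' points $\{\omega^i\mathrm{Id}\}\times\{\omega^j\mathrm{Id}\}$.

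Next I would take the GIT quotient. Since $\sl3c$ is connected it preserves each irreducible component of $V\times V$, so $\mathcal{X}_{\sl3c}(\z3z3)$ is the union of the GIT quotients of these sixteen components, each irreducible (the image of an irreducible variety) and closed in $\mathcal{X}_{\sl3c}(\z3z3)$ (the GIT quotient of a closed invariant subvariety is closed; a point is closed). For the fifteen pieces in which at least one of the two matrices is a scalar, that factor is central and $\sl3c$ acts transitively on $C$, so each such piece is a single $\sl3c$-orbit; the corresponding representations are diagonalizable, hence semisimple and the orbit is closed, so each piece contributes exactly one point. Evaluating the trace functions $\tau_s$ and $\tau_t$ separates these fifteen points: on $\{\omega^i\mathrm{Id}\}\times\{\omega^j\mathrm{Id}\}$ one reads off $(\tau_s,\tau_t)=(3\omega^i,3\omega^j)$, while on the two families of mixed pieces one gets $(0,3\omega^i)$ and $(3\omega^i,0)$, giving fifteen distinct values in $\mathbb{C}^2$.

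For the remaining component I set $X_0:=(C\times C)/\!/\sl3c$, which is irreducible, and I would compute its dimension through the generic fibre. A pair $(S,T)\in C\times C$ generates an irreducible subgroup as soon as, with $S$ put in diagonal form $\mathrm{diag}(1,\omega,\omega^2)$, the matrix $T$ leaves invariant no coordinate line or coordinate plane: this is because the $S$-invariant subspaces are precisely the coordinate subspaces, and it is a nonempty Zariski-open condition on $T\in C$. For such a pair, Schur's lemma gives centralizer $\mathbb{C}^*\mathrm{Id}$ in $\mathrm{GL}_3(\mathbb{C})$, hence stabilizer $\{\mathrm{Id},\omega\mathrm{Id},\omega^2\mathrm{Id}\}$ in $\sl3c$; its $\sl3c$-orbit is therefore closed of dimension $8$. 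Since the generic fibre of $C\times C\to X_0$ is such an orbit, $\dim X_0=\dim(C\times C)-8=12-8=4$.

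Finally I would assemble: $\mathcal{X}_{\sl3c}(\z3z3)=X_0\cup\{15\text{ points}\}$, and these sixteen irreducible closed subsets are exactly its irreducible components since none is contained in the union of the others. Indeed $X_0$ lies in the closed set $\{\tau_s=\tau_t=0\}$ (both functions vanish identically on $C\times C$), whereas each of the fifteen points has $\tau_s\neq0$ or $\tau_t\neq0$, so no point lies in $X_0$; and a $4$-dimensional irreducible set is not covered by finitely many points. The main obstacle is the dimension count for $X_0$ in the previous step: one must know that the generic pair of regular order‑$3$ elements generates an irreducible subgroup, so that the generic fibre of the quotient map attains the full dimension $8$ and $X_0$ is genuinely $4$-dimensional; everything else is bookkeeping resting on Lemma \ref{lemme_ordre3_sl3c} and the transitivity of conjugation on $C$.
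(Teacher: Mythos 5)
Your proof is correct, but it takes a genuinely different route from the paper's. You argue upstairs, on the representation variety: $\mathrm{Hom}(\z3z3,\sl3c)=V\times V$ with $V=\{S\in\sl3c\mid S^3=\mathrm{Id}\}$, you decompose $V$ into the three central elements and the closed $6$-dimensional conjugacy class $C$ of $\mathrm{diag}(1,\omega,\omega^2)$, and you then push the sixteen components of $V\times V$ through the GIT quotient, using general facts (a good quotient maps closed invariant sets to closed sets; Schur's lemma gives finite stabilizers for irreducible pairs; the theorem on fibre dimension) to get the fifteen isolated points and $\dim X_0=12-8=4$. The paper instead works downstairs in trace coordinates: it views $\mathcal{X}_{\sl3c}(\z3z3)$ inside Lawton's explicit model of $\mathcal{X}_{\sl3c}(F_2)\subset\mathbb{C}^9$, uses the order-$3$ lemma to impose $\mathrm{tr}(S)=\mathrm{tr}(S^{-1})=\mathrm{tr}(T)=\mathrm{tr}(T^{-1})=0$, and obtains $X_0$ as the explicit hypersurface $x^2-Q(z,z',w,w')x+P(z,z',w,w')=0$ in $\mathbb{C}^5$, whose irreducibility is checked by hand. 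Your approach is more intrinsic: it needs neither Lawton's theorem nor any polynomial computation, and it would apply verbatim to similar free products; what it does not produce is the explicit equation and the description of $X_0$ as a ramified double cover of $\mathbb{C}^4$ in the coordinates $(z,z',w,w',x)$, which is precisely the model the paper exploits later when studying $\mathrm{Fix}(\Phi_2)$ and the real forms. One point you should make explicit rather than assert: the nonemptiness of the open set of irreducible pairs in $C\times C$, since the whole dimension count ($8$-dimensional generic closed orbit, hence generic fibre of dimension $8$) rests on it; a one-line justification suffices, e.g.\ exhibit $T=PSP^{-1}$ for a concrete $P$ with no eigenline or invariant plane of $T$ a coordinate subspace, or invoke the existence of irreducible representations of $\z3z3$ into $\su21\subset\sl3c$ such as those appearing later in the paper.
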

 
 \begin{proof}
  Consider $\mathcal{X}_{\sl3c}(\z3z3) \subset \mathcal{X}_{\sl3c}(F_2) \subset \mathbb{C}^9$, as the image of $\mathrm{Hom}(\z3z3 , \sl3c )$ by the trace maps of elements $s,t,st,st^{-1} , s^{-1},t^{-1},t^{-1}s^{-1},ts^{-1}$, and of the commutator $[s,t]$. Let $\rho \in \mathrm{Hom}(\z3z3 , \sl3c )$. Denote by $S = \rho(s)$ and $T = \rho(t)$. By Lemma \ref{lemme_ordre3_sl3c}, either $S$ or $T$ is a scalar matrix, or $\mathrm{tr}(S)=\mathrm{tr}(S^{-1})=\mathrm{tr}(T)=\mathrm{tr}(T^{-1})=0$. Let us deal with this two cases separately.\\
  
 \emph{First case: $S$ or $T$ is a scalar matrix.}
Suppose, for example, that $S = \omega^i\mathrm{Id}$ with $i \in \{0,1,2\}$. Since $T$ is of finite order and hence diagonalizable, the representation is totally reducible, and it is conjugate to either a representation of the form \[S=\omega^i\mathrm{Id} \hspace{2cm} T = \omega^j\mathrm{Id}\] with $i,j \in \{0,1,2\}$, either a representation given by \[S=\omega^i\mathrm{Id} \hspace{2cm} T = \begin{pmatrix}
\omega^2 & 0 &0 \\
0 & \omega &0 \\
0 & 0 &1
\end{pmatrix}
.\]
 
Considering the symmetries, we obtain 15 points of the character variety, classified by the traces of $S$ and $T$ in the following way (where $i,j \in \{0,1,2\}$):
\[
\begin{array}{|c||c|c|c|}
\hline
\mathrm{tr}(S) & 3\omega^i & 0 & 3\omega^i \\
\hline
\mathrm{tr}(T) & 3\omega^j & 3\omega^j & 0\\
\hline
\end{array}
 \]
 
    Since the traces of $S$ and $T$ are different for these 15 points and both $0$ in the second case, the points are isolated in $\mathcal{X}_{\sl3c}(\z3z3)$.
 
 \emph{Second case: $\mathrm{tr}(S)=\mathrm{tr}(S^{-1})=\mathrm{tr}(T)=\mathrm{tr}(T^{-1})=0$.}  
   By Lemma \ref{lemme_ordre3_sl3c}, all the points of $\mathcal{X}_{\sl3c}(F_2)$ satisfying this condition are in $\mathcal{X}_{\sl3c}(\z3z3)$. Denote by $z = \mathrm{tr}(ST)$, $z' = \mathrm{tr}((ST)^{-1})$, $w = \mathrm{tr}(ST^{-1})$, $w' = \mathrm{tr}(TS^{-1})$ and $x = \mathrm{tr}([S,T])$.
   The equation defining $\mathcal{X}_{\sl3c}(F_2) \subset \mathbb{C}^9$ becomes:
   
   \[x^2 - (zz'+ww' - 3)x + (zz'ww' + z^3 + z'^3 +w^3 + w'^3 - 6zz' - 6ww' +9) = 0\]
   
 This polynomial is irreducible. Indeed, if it were not, it would be equal to a product of two polynomials of degree 1 in $x$. By replacing $z'$, $w$ and $w'$ by $0$, we would obtain a factorization of the form $x^2 + 3x +z^3+9 = (x-R_1(z))(x-R_2(z))$, with $R_1(z)R_2(z) = z^3 + 9$ and $R_1(z)+R_2(z) = -3$. By considering the degrees of the polynomials $R_1$ and $R_2$ we easily obtain a contradiction.
 
   Since the polynomial defining $X_0$ is irreducible, $X_0$ is an irreducible component of $\mathcal{X}_{\sl3c}(\z3z3)$. Furthermore, it can be embedded into $\mathbb{C}^5$ and it is a ramified double cover of $\mathbb{C}^4$.
 \end{proof}
 
 \subsection{Reducible representations in the component $X_0 \subset \mathcal{X}_{\sl3c}(\z3z3)$}
 
 In order to complete the description of the character variety $\mathcal{X}_{\sl3c}(\z3z3)$, we  are going to identify the points corresponding to reducible representations. The 15 isolated points of the algebraic set come from totally reducible representations ; it remains to determine the points of the component $X_0$ corresponding to reducible representations.
 
\begin{notat} 
  We consider here $X_0 \subset \mathbb{C}^5$, with coordinates $(z,z',w,w',x)$ corresponding to the traces of the images of $(st,(st)^{-1}, st^{-1}, ts^{-1}, [s,t])$ respectively.
 We denote by $X_0^{\mathrm{red}}$ the image of reducible representations in $X_0$
\end{notat} 

\begin{rem}
 If the coordinates $(z,z',w,w',x)$ correspond to a reducible representation, then $\Delta(z,z',w,w') = 0$. Indeed, for a reducible representation, the two commutators $[s,t]$ and $[t,s]$ have the same trace, and the polynomial $X - Q(z,z',w,w')X + P(z,z',w,w')$ has a double root equal to those traces.
\end{rem}
 
 We are going to show that the locus of the characters of reducible representations is a set of 9 complex lines, which intersect at six points with triple intersections, corresponding to totally reducible representations. Before doing the proof let us fix a notation for these lines. 
 
 \begin{notat}
 For $i,j \in \{0,1,2\}$, let \[L^{(i,j)} = \{(z,z',w,w',x) \in X_0 \mid \omega^i z = \omega^{-i} z' ; \omega^j w = \omega^{-j} w' ; \omega^i z +  \omega^j w = 3\}. \]
 
 Each $L^{(i,j)}$ is a complex line parametrized by the coordinate $z$ (or $w$), and these lines intersect with triple intersections at the six points of coordinates $(z,w) = (0,3\omega^j)$ and $(z,w) = (3\omega^i,0)$, where $i,j \in \{0,1,2\}$.
 \end{notat}
 
With this notation, we can state in a simpler way the proposition describing the points of $X_0$ corresponding to reducible representations. 
 
 \begin{prop}\label{prop_rep_red_z3z3}
 The points of $X_0$ corresponding to reducible representations are exactly those in the lines $L^{(i,j)}$. In other terms, we have
 \[X_0^{\mathrm{red}} = \bigcup_{i,j \in \{0,1,2\}} L^{(i,j)}.\]
 \end{prop}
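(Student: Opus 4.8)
The plan is to work with an explicit representation $\rho$ with $S=\rho(s)$, $T=\rho(t)$ of order dividing $3$ and trace zero (so each of $S,T$ has eigenvalues $1,\omega,\omega^2$), and to translate reducibility of $\rho$ into a statement about a common eigenvector or common invariant subspace of $S$ and $T$. Since $S$ and $T$ are each diagonalizable with the three distinct eigenvalues $1,\omega,\omega^2$, a subrepresentation corresponds to a subspace that is a sum of eigenlines of $S$ and also a sum of eigenlines of $T$. I would first show the inclusion $\bigcup L^{(i,j)} \subseteq X_0^{\mathrm{red}}$ by exhibiting, for each pair $(i,j)$, an explicit reducible representation whose character lands on $L^{(i,j)}$: take $S$ diagonal with a chosen eigenvalue on the first coordinate, $T$ upper triangular preserving the first coordinate line, compute $\operatorname{tr}(ST)$, $\operatorname{tr}(ST^{-1})$ etc., and check that the resulting coordinates satisfy the three defining equations of $L^{(i,j)}$ (the conditions $\omega^i z=\omega^{-i}z'$, $\omega^j w=\omega^{-j}w'$ come from the first eigenvalue of $S$ being $\omega^{-i}$ and of $T$ being $\omega^{-j}$, while $\omega^i z+\omega^j w=3$ is the relation among the traces forced by triangularity). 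I should also check that varying the remaining free data sweeps out the whole line, using the parametrization by $z$.

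For the reverse inclusion $X_0^{\mathrm{red}}\subseteq \bigcup L^{(i,j)}$, suppose $\rho$ is reducible with trace-zero $S,T$. Then there is a proper nonzero $\rho$-invariant subspace $W$; after taking a subrepresentation I may assume $\dim W = 1$ or $\dim W = 2$, and by passing to the dual I may assume $\dim W = 1$, i.e.\ $S$ and $T$ share a common eigenvector $v$. Say $Sv=\omega^{-i}v$ and $Tv=\omega^{-j}v$. Completing $v$ to a basis, $S$ and $T$ are simultaneously block upper-triangular with $(1,1)$ entries $\omega^{-i}$ and $\omega^{-j}$; then $ST$ has $(1,1)$ entry $\omega^{-i-j}$ and similarly for $ST^{-1}$, $(ST)^{-1}$, $TS^{-1}$. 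The $2\times 2$ lower-right blocks of $S$ and $T$ lie in $\mathrm{GL}_2$ with determinants $\omega^{i}$ and $\omega^{j}$ respectively (since $\det S=\det T=1$) and, because the eigenvalues of $S$ are $1,\omega,\omega^2$, the lower block of $S$ has the two remaining eigenvalues, hence trace $-\omega^{-i}$ and determinant $\omega^{i}$ — wait, more precisely trace $= (1+\omega+\omega^2)-\omega^{-i} = -\omega^{-i}$; likewise for $T$. Using the $2\times 2$ trace equation $\operatorname{tr}(AB)=\operatorname{tr}(A)\operatorname{tr}(B)-\det(A)\operatorname{tr}(B^{-1})$ for the lower blocks, one computes $z=\operatorname{tr}(ST)$, $z'=\operatorname{tr}((ST)^{-1})$, $w=\operatorname{tr}(ST^{-1})$, $w'=\operatorname{tr}(TS^{-1})$ directly in terms of $\omega^i,\omega^j$ and the two scalar traces of the off-block action; the relations defining $L^{(i,j)}$ then drop out, in particular $\omega^i z+\omega^j w=3$ comes from adding the contributions of the $(1,1)$ entries and the lower blocks.

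The main obstacle I anticipate is the bookkeeping in the reverse direction: handling the case $\dim W=2$ cleanly (the dualization trick should reduce it to $\dim W=1$, but one must be careful that $\z3z3$-reducibility is preserved and that the trace coordinates transform correctly under $\rho\mapsto {}^t\rho^{-1}$, i.e.\ $z\leftrightarrow z'$, $w\leftrightarrow w'$, $x\mapsto$ trace of the inverse commutator), and making sure that the computation of the four traces $z,z',w,w'$ from the block-triangular form is done without choosing a too-special complement — in general the blocks need not be literally diagonal, only the $(1,1)$ entries and the $2\times 2$ lower-right blocks are controlled, plus there is an off-diagonal $1\times 2$ piece that does not affect traces of these particular words. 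Once the four equations cutting out $L^{(i,j)}$ are verified, it remains to note that the value of $x$ is then forced (it is a root of the degree-two equation defining $X_0$, and on a line $L^{(i,j)}$ the discriminant $\Delta$ vanishes so the two roots coincide, consistent with the earlier remark that reducible characters satisfy $\Delta=0$), so the point indeed lies on $L^{(i,j)}\subset X_0$. Collecting the nine choices of $(i,j)\in\{0,1,2\}^2$ gives the union, and the six triple-intersection points are exactly the totally reducible characters where both $S$ and $T$ are non-scalar diagonal, which matches the description of $L^{(i,j)}$.
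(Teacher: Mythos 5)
Your plan is correct and follows essentially the same route as the paper: both directions reduce to a $2{+}1$ block decomposition (a common eigenvector for the reverse inclusion, an explicit diagonal/triangular pair for the forward one) and then come down to $2\times 2$ trace identities, with your ``sweep the line by the free data'' step playing the role of the paper's appeal to the Fricke--Klein--Vogt theorem. Two harmless slips to fix in the write-up: the $2\times 2$ identity should read $\mathrm{tr}(AB)=\mathrm{tr}(A)\mathrm{tr}(B)-\det(A)\,\mathrm{tr}(A^{-1}B)$, and with the convention $Sv=\omega^{-i}v$, $Tv=\omega^{-j}v$ the character actually lands on $L^{(i+j,\,i-j)}$ rather than $L^{(i,j)}$ --- a mere relabeling of the nine lines that does not affect the union.
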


\begin{proof}
 We are going to show a double inclusion. Let us first show that \[\displaystyle X_0^{\mathrm{red}} \subset \bigcup_{i,j \in \{0,1,2\}} L^{(i,j)}.\]
 
Let $\rho \in \mathrm{Hom}(\z3z3 , \sl3c)$ be a reducible representation such that $\chi_\rho \in X_0$. Let $S = \rho(s)$ and $T = \rho(t)$. Since the representation is reducible, we can suppose, after conjugating $\rho$, that 
\[ 
S = \omega^i \begin{pmatrix}
S' & \\
 & 1
\end{pmatrix}
\text{ and }
T = \omega^j \begin{pmatrix}
T' & \\
 & 1
\end{pmatrix}
\]
where $i,j \in \{0,1,2\}$, and $S', T' \in \mathrm{SL}_2(\mathbb{C})$ of order 3 (and trace $-1$). Notice that it is enough to show that, whenever $i=j=0$, we have $\chi_\rho \in L^{(0,0)}$, in order to have the other cases by symmetry.
 Let us consider this case, with $i=j=0$. Since $S'T' \in \mathrm{SL}_2(\mathbb{C})$, we have $\mathrm{tr}(S'T') = \mathrm{tr}((S'T')^{-1})$, hence $\mathrm{tr}(ST) = \mathrm{tr}((ST)^{-1})$ and $z=z'$. Similarly, we know that $w=w'$. Furthermore, the trace equation in $\mathrm{SL}_2(\mathbb{C})$ gives
$\mathrm{tr}(S')\mathrm{tr}(T') = \mathrm{tr}(S'T') + \mathrm{tr}(S'T'^{-1})$.
Hence $(-1)^2 = (z-1) + (w-1)$, i.e. $z+w=3$. We obtain finally that $\chi_\rho \in L^{(0,0)}$.

Let us show now the other inclusion. In order to do it, it is enough to show that all the points of $L^{(0,0)}$ are images of representations given by 
\[ 
S =  \begin{pmatrix}
S' & \\
 & 1
\end{pmatrix}
\text{ and }
T = \begin{pmatrix}
T' & \\
 & 1
\end{pmatrix}
\]
 with $S',T' \in \mathrm{SL}_2(\mathbb{C})$, and recover the points of the other lines $L^{(i,j)}$ by considering $(\omega^i S, \omega^j T)$.
Since the image of every reducible representation of this form satisfies $z=z'$, $w=w'$ and $z+w=3$, we have to show that any $z\in \mathbb{C}$ can be written as $1 + \mathrm{tr}(S'T')$ with $S',T' \in \mathrm{SL}_2(\mathbb{C})$ of order 3 and trace $-1$. Fix $z \in \mathbb{C}$. Since the $\mathrm{SL}_2(\mathbb{C})$-character variety of $F_2$ is isomorphic to $\mathbb{C}^3$ by the trace maps of two generators and their product, there exist matrices $S', T' \in \mathrm{SL}_2(\mathbb{C})$ such that $(\mathrm{tr}(S'),\mathrm{tr}(T'),\mathrm{tr}(S'T')) = (-1,-1,z-1)$. In this case, the two matrices $S'$ and $T'$ have trace $-1$ and hence order 3, and we have $z = 1 + \mathrm{tr}(S'T')$.
\end{proof}

\begin{rem}
 The lines $L^{(i,j)}$ intersect with triple intersections at the six points of coordinates $(z,w) = (3\omega^i,0)$ and $(z,w) = (0,3\omega^i)$ with $i \in \{0,1,2\}$. The corresponding representations are exactly the totally reducible ones, where $S$ and $T$ are diagonal with eigenvalues $(1,\omega,\omega^2)$.
\end{rem} 
 
 \subsection{The fixed points of the involution $\Phi_2$}
 We are going to describe here the character varieties $\mathcal{X}_{\su21}(\z3z3)$ and  $\mathcal{X}_{\mathrm{SU}(3)}(\z3z3)$ as fixed points of the involution $\Phi_2$ of $\mathcal{X}_{\sl3c}(\z3z3)$. In this technical subsection, we will choose coordinates and find equations that describe the fixed points of $\Phi_2$. We will identify the characters corresponding to reducible representations as lying in an arrangement of 9 lines, and show that the ones corresponding to irreducible representations are in a smooth manifold of real dimension 4. We will describe the set obtained in this way in Subsection \ref{sous_sect_descr_chi_z3z3}.
 
 \begin{rem}
 Notice first that the 15 isolated points of $\mathcal{X}_{\sl3c}(\z3z3)$ come from totally reducible representations taking values in $\su21$ and $\mathrm{SU}(3)$. Hence they are in $\mathcal{X}_{\su21}(\z3z3) \cap \mathcal{X}_{\mathrm{SU}(3)}(\z3z3)$, and so in $\mathrm{Fix}(\Phi_2)$.
 \end{rem} 
 
 From now on, we will only consider the points of $\mathrm{Fix}(\Phi_2) \cap X_0$. Recall that we have identified $X_0$ to $\{(z,z',w,w',x) \in \mathbb{C}^5 \mid x^2 -Q(z,z',w,w')x + P(z,z',w,w') = 0\}$ by considering the trace maps of $st, (st)^{-1} , st^{-1} , ts^{-1}$ and $[s,t]$.
 
 \begin{rem}\label{rem_fix_x0}
  If $(z,z',w,w',x) \in  \mathrm{Fix}(\Phi_2) \cap X_0$, then $z' = \con{z}$ and $w' = \con{w}$. In this case, the polynomials $P$ and $Q$, that we will denote by $P(z,w)$ and $Q(z,w)$, take real values. Furthermore, we can write the discriminant of $X^2 -Q(z,w)X + P(z,w)$ as \[\Delta(z,w) = f(z) + f(w) - 2|z|^2|w|^2 +27,\]

where $f(z) = |z|^4 - 8\Re(z^3) + 18|z|^2 -27$ is the function described by Goldman in \cite{goldman} which is nonzero in the traces of regular elements of $\su21$ (positive for loxodromic elements, negative for elliptic elements). In a point of $\mathrm{Fix}(\Phi_2) \cap X_0$, the two roots of $X^2 -Q(z,w)X + P(z,w)$ are the traces of the images of the commutators $[s,t]$ and $[t,s]$. Since these commutators are inverses, and since we are in $\mathrm{Fix}(\Phi_2)$, the two roots are complex conjugate, which is equivalent to $\Delta(z,w) \leq 0$. 
 \end{rem}
 
 \begin{prop} We have: 
  \[ \mathrm{Fix}(\Phi_2) \cap X_0 = \{ (z,z',w,w',x) \in X_0 \mid z' = \con{z} , w' = \con{w} , \Delta(z,w) \leq 0 \}\]
 \end{prop}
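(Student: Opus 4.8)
The plan is to make the involution $\Phi_2$ completely explicit in the coordinates $(z,z',w,w',x)$ of $X_0 \subset \mathbb{C}^5$ and then read off both inclusions at once. Recall that on $\mathcal{X}_{\sl3c}(\Gamma)$ the involution $\Phi_2$ sends a character $\chi$ to $g \mapsto \con{\chi(g^{-1})}$. Using $(st)^{-1} = t^{-1}s^{-1}$, $(st^{-1})^{-1} = ts^{-1}$ and $[s,t]^{-1} = [t,s]$, together with the fact from Lawton's description of $\mathcal{X}_{\sl3c}(F_2)$ that $\mathrm{tr}[s,t]$ and $\mathrm{tr}[t,s]$ are the two roots of $X^2 - Q(z,z',w,w')X + P(z,z',w,w')$ — so that $\mathrm{tr}[t,s] = Q(z,z',w,w') - x$ once $x=\mathrm{tr}[s,t]$ — I would first establish that $\Phi_2$ acts on $X_0$ by
\[(z,z',w,w',x) \longmapsto \bigl(\con{z'},\, \con{z},\, \con{w'},\, \con{w},\, \con{Q(z,z',w,w') - x}\bigr).\]

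Consequently a point of $X_0$ is fixed by $\Phi_2$ exactly when $z' = \con z$, $w' = \con w$ and $x = \con{Q(z,\con z,w,\con w) - x}$. On the locus $z' = \con z$, $w' = \con w$ the polynomials $P$ and $Q$ take real values — this is recorded in Remark \ref{rem_fix_x0} — so writing $Q = Q(z,w)\in\mathbb{R}$ and $P = P(z,w)\in\mathbb{R}$ the condition on the last coordinate becomes $x + \con x = Q$, i.e.\ $\Re x = Q/2$. Since $x$ is a root of the real quadratic $X^2 - QX + P$ of discriminant $\Delta(z,w) = Q^2 - 4P$, an elementary discussion of real quadratics then gives: if $\Delta(z,w) > 0$ the two roots are real, distinct, and neither equals $Q/2$; if $\Delta(z,w) \leq 0$ every root has real part $Q/2$ (the case $\Delta(z,w) = 0$ being the real double root $Q/2$). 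Hence $\Re x = Q/2$ holds if and only if $\Delta(z,w) \leq 0$, which is precisely the claimed description of $\mathrm{Fix}(\Phi_2) \cap X_0$.

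The one ingredient that is not purely formal is the reality of $P(z,w)$ and $Q(z,w)$ on $\{z' = \con z,\ w' = \con w\}$, which I would simply take from Remark \ref{rem_fix_x0} (it ultimately follows from the invariance of $P$ and $Q$, as polynomials with real coefficients in the eight trace coordinates, under simultaneously swapping the coordinates of $st$ with those of $(st)^{-1}$ and of $st^{-1}$ with those of $ts^{-1}$). Beyond that, the only point to keep an eye on is the degenerate locus $\Delta(z,w) = 0$: there the defining quadratic of $X_0$ has a real double root, and one has to note that $\Re x = Q/2$ still holds so that these points do lie in $\mathrm{Fix}(\Phi_2)$ — this is immediate, so I do not expect a genuine obstacle here.
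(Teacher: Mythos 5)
Your proof is correct, and it organizes the argument differently from the paper, handling both inclusions at once. The paper splits the statement: the inclusion $\subset$ is exactly Remark \ref{rem_fix_x0} (the two roots of $X^2-Q(z,w)X+P(z,w)$ are the traces of the images of $[s,t]$ and $[t,s]$, and being in $\mathrm{Fix}(\Phi_2)$ forces these to be complex conjugates, i.e.\ $\Delta(z,w)\leq 0$), while for $\supset$ it lifts the point to a semi-simple representation $\rho$ with the given coordinates and invokes Theorem \ref{thm_semisimple_char}: $\rho$ and $\Phi_2(\rho)$ are semi-simple with the same character, hence conjugate, so the character is fixed. You instead make the induced involution explicit in Lawton's coordinates, $(z,z',w,w',x)\mapsto(\con{z'},\con{z},\con{w'},\con{w},\con{Q(z,z',w,w')-x})$, using $\mathrm{tr}[t,s]=Q-\mathrm{tr}[s,t]$ (the remark following Lawton's theorem) together with the injectivity of the trace coordinates on the character variety, and reduce the fixed-point condition to $z'=\con{z}$, $w'=\con{w}$, $\Re x = Q/2$, which for a real quadratic is equivalent to $\Delta\leq 0$. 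This buys a uniform, purely coordinate-level argument that avoids choosing a semi-simple representative and the equal-character-implies-conjugate theorem; the paper's route, by contrast, needs less explicit bookkeeping of the involution but more representation-theoretic input. Both arguments rest on the reality of $P(z,w)$ and $Q(z,w)$ on the locus $z'=\con{z}$, $w'=\con{w}$, which you correctly take from Remark \ref{rem_fix_x0}. The only small point you could state explicitly is that $\Phi_2$ preserves $X_0$, which is immediate since the image of a character with vanishing traces of $s^{\pm 1}$ and $t^{\pm 1}$ again has vanishing traces there.
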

 
 \begin{proof}
 We are going to show a double inclusion. The first one is given by Remark \ref{rem_fix_x0} ; Let us show the second.
 
 Let $z,w \in \mathbb{C}$ such that $\Delta(z,w) \leq 0$. Let $x$ be a root of $X^2 - Q(z,w)X + P(z,w)$. Since $\Delta(z,w) \leq 0$, the other root of the polynomial is $\con{x}$. We know that $(z,\con{z},w,\con{w},x) \in X_0$ ; we want to show that $(z,\con{z},w,\con{w},x) \in \mathrm{Fix}(\Phi_2)$. Let $\rho \in \mathrm{Hom}(\Gamma, \sl3c)$ be a semi-simple  representation with image in $X_0$ equal to $(z,\con{z},w,\con{w},x)$. It is enough to prove that for all $\gamma \in \Gamma$ we have $\mathrm{tr}(\rho(\gamma)) = \con{\mathrm{tr}(\rho(\gamma)^{-1})}$. But the representation ${}^t\!\rho^{-1}$ has image $(\con{z},z,\con{w},w,\con{x})$ in $X_0$. We deduce that the representations $\rho$ and ${}^t\!\con{\rho}^{-1}$ are semi-simple and have the same character. Hence they are conjugate and for all $\gamma \in \Gamma$ we have $\mathrm{tr}(\rho(\gamma)) = \con{\mathrm{tr}(\rho(\gamma)^{-1})}$, and so $(z,\con{z},w,\con{w},x) \in \mathrm{Fix}(\Phi_2)$.
 \end{proof}
 
 \begin{notat}
  From now on, we will consider $\mathrm{Fix}(\Phi_2) \cap X_0$ as $\{(z,w,x) \in \mathbb{C}^3 \mid \Delta(z,w) \leq 0 , x^2 - Q(z,w)x + P(z,w) =0\}$. The projection on the first two coordinates is a double cover of $\{(z,w) \in \mathbb{C}^2 \mid \Delta(z,w) \leq 0 \}$ outside from the level set $\Delta(z,w) = 0$, where points have a unique pre-image.
 \end{notat}
 
 We are going to identify the points corresponding to reducible representations, and then show that outside from these points, the $\su21$ and $\mathrm{SU}(3)$-character varieties are smooth manifolds.
 Let us begin by identifying the points of $X_0 \cap \mathrm{Fix}(\Phi_2)$ corresponding to reducible representations with the coordinates $(z,w)$.
 
 \begin{rem}
  Let $(z,w,x) \in X_0 \cap \mathrm{Fix}(\Phi_2)$. Let $\rho \in \mathrm{Hom}(\z3z3 , \sl3c)$ have coordinates $(z,\con{z},w,\con{w},x)$. The following assertions are equivalent:
  \begin{enumerate}
   \item $\rho$ is reducible.
   \item There exist $i,j \in \{0,1,2\}$ such that $\omega^i z$ and $\omega^jw$ are real and $\omega^i z + \omega^jw = 3$.
  \end{enumerate}
  In this case $\Delta(z,w) = 0$.
 \end{rem}
 \begin{proof}
   It is an immediate consequence of Proposition \ref{prop_rep_red_z3z3} and the fact that we are in $\mathrm{Fix}(\Phi_2)$ and hence, in the coordinates $(z,z',w,w',x)\in \mathbb{C}^5$, we have $z'=\con{z}$ and $w' = \con{w}$. At last, we check that $\Delta(z,3-z) = 0$. If we are in the setting of the equivalence, we have $\Delta(z,w) = 0$.
\end{proof}
 
 Let us show now that the points corresponding to irreducible representations form a smooth manifold.
 
 \begin{prop}\label{prop_rep_irr_lisses}
 Outside from the points corresponding to reducible representations, the set $X_0 \cap \mathrm{Fix}(\Phi_2)$ is a sub-manifold of $\mathbb{C}^3$ of real dimension 4.
 \end{prop}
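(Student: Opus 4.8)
The plan is to eliminate the inequality $\Delta(z,w)\le 0$ by a change of variables, so that $X_0\cap\mathrm{Fix}(\Phi_2)$ becomes a regular level set away from the reducible locus. At a point of $\mathrm{Fix}(\Phi_2)\cap X_0$ the two roots of $X^2-Q(z,w)X+P(z,w)$ are the traces of $[s,t]$ and of $[t,s]=[s,t]^{-1}$, hence complex conjugate, so a root $x$ satisfies $x+\con{x}=Q(z,w)$ and $(x-\con{x})^2=Q(z,w)^2-4P(z,w)=\Delta(z,w)$. Writing $x=\tfrac12 Q(z,w)+iv$ with $v\in\mathbb{R}$, the map $(z,w,v)\mapsto\bigl(z,w,\tfrac12 Q(z,w)+iv\bigr)$ is a real-analytic embedding of
\[
M:=\bigl\{\,(z,w,v)\in\mathbb{C}^2\times\mathbb{R}\ \bigm|\ v^2+\tfrac14\Delta(z,w)=0\,\bigr\}
\]
onto $\mathrm{Fix}(\Phi_2)\cap X_0\subset\mathbb{C}^3$, by the description of $\mathrm{Fix}(\Phi_2)\cap X_0$ obtained above (the constraint $\Delta\le 0$ has become automatic). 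So it suffices to prove that $M$, with the points coming from reducible representations removed, is a smooth manifold of real dimension $4$.

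Set $g(z,w,v)=v^2+\tfrac14\Delta(z,w)$, so that $M=g^{-1}(0)$ inside $\mathbb{C}^2\times\mathbb{R}\cong\mathbb{R}^5$. Then $dg=\tfrac14\,d\Delta+2v\,dv$ vanishes only at points with $v=0$ and $d\Delta=0$, and $v=0$ together with $g=0$ forces $\Delta(z,w)=0$. Hence $M$ is a smooth hypersurface of $\mathbb{R}^5$, that is, a smooth $4$-manifold, away from the set $B=\{(z,w,0)\mid\Delta(z,w)=0,\ d\Delta=0\}$. (Over the open set $\{\Delta<0\}$ one simply recovers the two graphs $x=\tfrac12 Q\pm\tfrac{i}{2}\sqrt{-\Delta}$, and near a point with $\Delta=0$ and $d\Delta\ne 0$ one may take $\Delta$ as a local coordinate to produce an explicit chart.) The proposition therefore reduces to showing that every point of $B$ corresponds to a reducible representation.

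I would deduce this from the smoothness of $X_0$ at irreducible characters. Writing $X_0\subset\mathbb{C}^5$ as the hypersurface $\{x^2-Q(z,z',w,w')x+P(z,z',w,w')=0\}$ and using $\Delta=Q^2-4P$, one computes that its singular locus is exactly $\{x=\tfrac12 Q,\ \Delta=0,\ d\Delta=0\}$, whose intersection with the real slice $z'=\con{z}$, $w'=\con{w}$ is precisely the image of $B$. On the other hand $\mathrm{Hom}(\z3z3,\sl3c)=\{(S,T)\mid S^3=T^3=\mathrm{Id}\}$ is, near any representation whose generators are non-central, the product of the single conjugacy class of $\mathrm{diag}(1,\omega,\omega^2)$ with itself --- a smooth variety of dimension $12$ --- and $\mathrm{PGL}_3(\mathbb{C})$ acts freely and properly on the locus of irreducible representations, so the quotient $X_0$ is smooth of complex dimension $4$ at every irreducible character. (Equivalently: $X_0$ is then a complex manifold there, and the fixed locus of the anti-holomorphic involution $\Phi_2$ is a totally real submanifold of real dimension $\dim_{\mathbb{C}}X_0=4$.) Consequently every singular point of $X_0$, and hence every point of $B$, is the character of a reducible representation --- equivalently, by Proposition~\ref{prop_rep_red_z3z3}, it lies on one of the lines $L^{(i,j)}$.

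This last step --- locating the singular locus of $X_0$ --- is the part I expect to require the most care. A self-contained alternative, avoiding the quotient argument, would be to establish it by direct computation: from $\Delta(z,w)=f(z)+f(w)-2|z|^2|w|^2+27$ with $f(z)=|z|^4-8\Re(z^3)+18|z|^2-27$, compute $\partial_{\con{z}}\Delta$ and $\partial_{\con{w}}\Delta$, impose $\partial_{\con{z}}\Delta=\partial_{\con{w}}\Delta=0$ together with $\Delta(z,w)=0$, and, using the symmetries $z\mapsto\omega z$, $w\mapsto\omega w$ and $z\leftrightarrow w$ of $\Delta$, check that every solution satisfies $\omega^i z,\ \omega^j w\in\mathbb{R}$ with $\omega^i z+\omega^j w=3$ for some $i,j\in\{0,1,2\}$, i.e.\ lies on some $L^{(i,j)}$.
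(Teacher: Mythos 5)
Your proposal is correct, and its decisive step runs along a genuinely different line than the paper's. The paper keeps the two real equations $x+\con{x}=Q$, $x\con{x}=P$ and shows by an explicit computation that the differential of $(f_1,f_2):\mathbb{C}^3\to\mathbb{R}^2$ has rank $2$ unless $x_0\in\mathbb{R}$, $z_0^3,w_0^3\in\mathbb{R}$ and the point lies on one of the lines $L^{(i,j)}$, i.e.\ is reducible; your substitution $x=\tfrac12Q(z,w)+iv$ collapses this to the single equation $v^2+\tfrac14\Delta=0$ in $\mathbb{R}^5$, which is a clean repackaging (your unexecuted ``Route 2'' is then essentially the paper's computation, transplanted to the critical locus of $\Delta$). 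Where you genuinely diverge is in handling the degenerate set $B$: instead of computing, you observe that $B$ sits inside the complex singular locus of the hypersurface $X_0=\{x^2-Qx+P=0\}$ (your Wirtinger-type identification of real and complex critical points of $\Delta$ on the slice $z'=\con z$, $w'=\con w$ is correct), and that $X_0$ is smooth at irreducible characters because $\mathrm{Hom}(\z3z3,\sl3c)$ is, near representations with non-central generators, the product of two copies of the $6$-dimensional conjugacy class of $\mathrm{diag}(1,\omega,\omega^2)$, on whose irreducible locus $\mathrm{PGL}_3(\mathbb{C})$ acts with trivial stabilizers, so the quotient is a smooth geometric quotient of dimension $4$ there. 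This is a valid and more conceptual argument, exploiting the special structure of the free product of two finite cyclic groups; it buys an explanation of \emph{why} the only obstructions to smoothness are reducible characters, at the cost of invoking GIT background (properness of the action on irreducible representations, or Luna's slice theorem, plus the Jacobian criterion, which detects singularities via $dF=0$ only because $F$ is irreducible so that $X_0$ is really the reduced hypersurface $V(F)$ --- a point worth stating explicitly, and one place where you implicitly use the paper's irreducibility of $F$). The paper's elementary computation, by contrast, is self-contained and gives slightly more, namely that the degenerate points lie exactly on the lines $L^{(i,j)}$, whereas your argument only gives the containment in the reducible locus --- which is all the proposition requires.
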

 
 \begin{proof}
 Recall that we defined $X_0 \cap \mathrm{Fix}(\Phi_2)$ as:

\[\{ (z,w,x) \in \mathbb{C}^3 \mid x^2 -Q(z,w)x+P(z,w) =0 , \Delta(z,w) \leq 0 \} \]
where 
\[ Q(z,w) =|z|^2 + |w|^2 - 3 \]
\[ P(z,w) = 2\Re(z^3) + 2\Re(w^3) + |z|^2|w|^2 - 6|z|^2 -6|w|^2 + 9.\]

We can hence re-write
$X_0 \cap \mathrm{Fix}(\Phi_2)$ as :

\[\{ (z,w,x) \in \mathbb{C}^3 \mid x + \con{x} = Q(z,w) , x\con{x} = P(z,w) \} . \]

Consider the functions $f_1 , f_2 : \mathbb{C}^3 \rightarrow \mathbb{R}$ given by 
$f_1(z,w,x) = Q(z,w) - ( x + \con{x})$ and $f_2(z,w,x) = P(z,w) - (x  \con{x})$, and then $f = (f_1,f_2) : \mathbb{C}^3 \rightarrow \mathbb{R}^2$. With this notation, $X_0 \cap \mathrm{Fix}(\Phi_2) = f^{-1}(\{0\})$. We are going to show that outside from the points corresponding to reducible representations, $f$ is a submersion, i.e. that $\mathrm{d}f$ is of rank 2.

 Let $(z_0,w_0,x_0) \in X_0 \cap \mathrm{Fix}(\Phi_2)$. 
Notice first that \[\frac{\partial f}{\partial x}(z_0,w_0,x_0) = \begin{pmatrix}
  -1 \\ -\con{x_0}
  \end{pmatrix} \text{ and } \frac{\partial f}{\partial  \con{x}} = \begin{pmatrix}
  -1 \\ -x_0
  \end{pmatrix},\]
   hence $\mathbb{d}f$ is always of rank at least $1$ and, if $x_0 \notin \mathbb{R}$, the map $f$ is a submersion at $(z_0,w_0,x_0)$.
  Suppose now that $\mathrm{d}f (z_0,w_0,x_0)$ is of rank $1$. In particular, $x_0 \in \mathbb{R}$. We want to show that in this case, the point $(z_0,w_0,x_0)$ corresponds to a reducible representation.
  
  We have \[z_0\frac{\partial f}{\partial z}(z_0,w_0,x_0) = \begin{pmatrix}
  |z_0|^2 \\ 3z_0^3 + |z_0|^2|w_0|^2 - 6|z_0|^2
  \end{pmatrix}
   \text{ and }\]
    \[\con{z_0}\frac{\partial f}{\partial  \con{z}}(z_0,w_0,x_0) = \begin{pmatrix}
  |z_0|^2 \\ 3\con{z_0}^3 + |z_0|^2|w_0|^2 - 6|z_0|^2
  \end{pmatrix},\]
  
  and, since the two vectors linearly dependent, we have $z_0^3 \in \mathbb{R}$. In the same way, $w_0^3 \in \mathbb{R}$. Then there exist $r_1,r_2 \in \mathbb{R}$ and $i,j \in \{0,1,2\}$ such that $z_0 = \omega^i r_1$ and $w_0 = \omega^j r_2 $.
  By Proposition \ref{prop_rep_red_z3z3}, it is enough to show that $r_1 + r_2 = 3$ in order to finish the proof. We consider two cases:

\emph{First case : $r_1$ or $r_2$ is zero.}
 Suppose, for example, that $r_2 = 0$. In this case, since $f(0,0,x_0) \neq (0,0)$, we have $r_1 \neq 0$. On the one hand, we have
 $2x_0 = Q(z,0) = r_1^2 - 3$. On the other hand, since $z_0\frac{\partial f}{\partial z}(z_0,w_0,x_0)$ and $\frac{\partial f}{\partial x}(z_0,w_0,x_0)$ are linearly dependent, we have $x_0 = 3(r_1-2)$. We deduce that $6(r_1-2) = 2x_0 = r_1^2 - 3$, hence $r_1^2 - 6r_1 + 9 = 0$ and $r_1 = 3$.
 
\emph{Second case : $r_1 , r_2 \neq 0$.}  We know that the following vectors are collinear :
 \[z_0\frac{\partial f}{\partial z}(z_0,w_0,x_0) =
 \begin{pmatrix}
  |z_0|^2 \\ 3z_0^3 + |z_0|^2|w_0|^2 - 6|z_0|^2 \end{pmatrix} = r_1^2 \begin{pmatrix}
  1 \\ 3r_1 + r_2^2 -6 \end{pmatrix} \]
%  and 
  \[w_0\frac{\partial f}{\partial w}(z_0,w_0,x_0) =\begin{pmatrix}
  |w_0|^2 \\ 3w_0^3 + |w_0|^2|z_0|^2 - 6|w_0|^2 \end{pmatrix} = r_2^2 \begin{pmatrix}
  1 \\ 3r_2 + r_1^2 -6 \end{pmatrix}.\]
 We deduce that $3r_1 + r_2^2 -6 = 3r_2 + r_1^2 -6$. If $r_1 \neq r_2$, then they are the two roots of a polynomial of the form $X^2 - 3X +k$, hence $r_1 + r_2 =3$. If not, and $r_1 = r_2$ we have $2x_0 = Q(z_0,w_0) = 2r_1^2 - 6$ and, since $z_0\frac{\partial f}{\partial z}(z_0,w_0,x_0)$ and $\frac{\partial f}{\partial x}(z_0,w_0,x_0)$ are collinear, $x_0 = r_1^2 - 3r_1 -6$. We deduce that $r_1 = \frac{3}{2}$ and $r_1 + r_2 = 3$.
 \end{proof}

 \subsection{Description of $\mathcal{X}_{\su21}(\z3z3)$ and $\mathcal{X}_{\mathrm{SU}(3)}(\z3z3)$} \label{sous_sect_descr_chi_z3z3}
 
 We are going to describe here the character varieties $\mathcal{X}_{\su21}(\z3z3)$ and  $\mathcal{X}_{\mathrm{SU}(3)}(\z3z3)$. In order to do it, we are going to study in detail $\mathrm{Fix}(\Phi_2)$, verify that it is the union of the two character varieties, and that their intersection corresponds to reducible representations. We finally consider two slices of $\mathrm{Fix}(\Phi_2)$, that were studied respectively by Parker and Will in \cite{ParkerWill} and by Falbel, Guilloux, Koseleff, Rouiller and Thistlethwaite in \cite{character_sl3c}.
 
 First, consider the $15$ isolated points of $\mathcal{X}_{\sl3c}(\z3z3)$, which are all in $\mathrm{Fix}(\Phi_2)$. They correspond to totally reducible representations. Since an order 3 matrix is conjugated to a matrix in $\su21$ and $\mathrm{SU}(3)$, we have the following remark:
 
 \begin{rem}
The points of $\mathrm{Fix}(\Phi_2)$ corresponding to totally reducible representations are all in $\mathcal{X}_{\su21}(\z3z3) \cap \mathcal{X}_{\mathrm{SU}(3)}(\z3z3)$.
 \end{rem}
 
 It remains to consider the representations of $X_0 \cap \mathrm{Fix}(\Phi_2)$. Proposition \ref{prop_inter_charvar_reelle} ensures us that points corresponding to irreducible representations are exactly in one of the character varieties $\mathcal{X}_{\su21}(\z3z3)$ and $\mathcal{X}_{\mathrm{SU}(3)}(\z3z3)$. For the points of $X_0$ corresponding to reducible representations, we briefly modify the proof of Proposition \ref{prop_rep_red_z3z3} in order to obtain the following remark:
 
 \begin{rem}\label{rem_tot_red_z3z3}
  The points of $\mathrm{Fix}(\Phi_2) \cap X_0$ corresponding to reducible representations are in $\mathcal{X}_{\su21}(\z3z3)$. Only some of them are in  $\mathcal{X}_{\mathrm{SU}(3)}(\z3z3)$.
 \end{rem}
\begin{proof}
 A reducible representation $\rho$ with character in $X_0$ is conjugate to a representation given by
 \[ 
S = \omega^i \begin{pmatrix}
S' & \\
 & 1
\end{pmatrix}
\text{ and }
T = \omega^j \begin{pmatrix}
T' & \\
 & 1
\end{pmatrix}
\]
with $i,j \in \{0,1,2\}$, and $S', T' \in \mathrm{SL}_2(\mathbb{C})$ of order 3 (and trace $-1$). Since $\chi_\rho \in \mathrm{Fix}(\Phi_2)$, the representation $\rho' : \z3z3 \rightarrow \mathrm{SL}_2(\mathbb{C})$ given by $\rho' (s) = S'$ and $\rho' (t) = T'$, is in $\mathrm{Fix}(\Phi_2) \subset \mathcal{X}_{\mathrm{SL}_2(\mathbb{C})}(\z3z3)$. If $\rho$ is totally reducible, then, by Remark \ref{rem_tot_red_z3z3}, $\chi_\rho \in \mathcal{X}_{\su21}(\z3z3) \cap \mathcal{X}_{\mathrm{SU}(3)}(\z3z3)$. If not, $\rho'$ is irreducible and, by Proposition \ref{prop_invol_traces}, maybe after a conjugation of $\rho'$, we have $S',T' \in \mathrm{SU}(2)$ or $\mathrm{SU}(1,1)$. If $S',T' \in \mathrm{SU}(2)$, then $S,T \in \su21 \cap \mathrm{SU}(3)$. If, on the other hand, $S',T' \in \mathrm{SU}(1,1)$, then $S,T \in \su21$. It remains to see that the second case happens.  The point $(4,-1,7) \in X_0 \cap \mathrm{Fix}(\Phi_2)$ corresponds to a reducible representation, with a trace $4$ element, and so it cannot take values in $\mathrm{SU}(3)$.
\end{proof} 
 
 Furthermore, by noticing that an irreducible representation cannot take values at the same time in $\su21$ and in $\mathrm{SU}(3)$, we obtain the following proposition :
 
 \begin{prop}We have
  \[\mathrm{Fix}(\Phi_2) = \mathcal{X}_{\su21}(\z3z3) \cup \mathcal{X}_{\mathrm{SU}(3)}(\z3z3).\]
   The subsets $ \mathcal{X}_{\su21}(\z3z3)$ and $ \mathcal{X}_{\su21}(\z3z3)$ are non-empty and intersect only at points corresponding to reducible representations.
 \end{prop}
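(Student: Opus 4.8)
The plan is to combine the decomposition of $\mathcal{X}_{\sl3c}(\z3z3)$ into its $15$ isolated points and the component $X_0$ with Proposition \ref{prop_invol_traces}, Proposition \ref{prop_inter_charvar_reelle} and Remark \ref{rem_tot_red_z3z3}. The inclusion $\mathcal{X}_{\su21}(\z3z3) \cup \mathcal{X}_{\mathrm{SU}(3)}(\z3z3) \subseteq \mathrm{Fix}(\Phi_2)$ is immediate: it is exactly the content of the remark asserting $\mathcal{X}_{\mathrm{SU}(p,q)}(\Gamma) \subset \mathrm{Fix}(\Phi_2)$, applied to $(p,q) = (3,0)$ and $(p,q) = (2,1)$, which exhaust the unitary real forms of $\sl3c$.

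For the reverse inclusion, I would take a point $x \in \mathrm{Fix}(\Phi_2)$ and argue according to which irreducible component of $\mathcal{X}_{\sl3c}(\z3z3)$ contains it. If $x$ is one of the $15$ isolated points, it is the character of a totally reducible representation, and the remark on totally reducible representations already places it in $\mathcal{X}_{\su21}(\z3z3) \cap \mathcal{X}_{\mathrm{SU}(3)}(\z3z3)$. If $x \in X_0 \cap \mathrm{Fix}(\Phi_2)$, I would choose a semi-simple representation $\rho$ with $\chi_\rho = x$ (one exists, since closed conjugation orbits are precisely those of semi-simple representations), so that $\rho$ is either irreducible or reducible. If $\rho$ is irreducible, Proposition \ref{prop_invol_traces} with $n = 3$ shows $\rho$ is conjugate to a representation with values in $\mathrm{SU}(3)$ or $\su21$, so $x$ lies in one of the two character varieties. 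If $\rho$ is reducible, Remark \ref{rem_tot_red_z3z3} shows $x \in \mathcal{X}_{\su21}(\z3z3)$. In every case $x \in \mathcal{X}_{\su21}(\z3z3) \cup \mathcal{X}_{\mathrm{SU}(3)}(\z3z3)$, which yields the claimed equality.

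Non-emptiness is clear: the trivial representation, or more relevantly any of the $15$ isolated points (coming from diagonal matrices with eigenvalues cube roots of unity), takes values in both $\mathrm{SU}(3)$ and $\su21$, so both character varieties are non-empty. For the intersection statement, I would take $x \in \mathcal{X}_{\su21}(\z3z3) \cap \mathcal{X}_{\mathrm{SU}(3)}(\z3z3)$ and let $\rho$ be any representation with $\chi_\rho = x$: since $(p,q) = (3,0)$ and $(p',q') = (2,1)$ satisfy $p \neq p'$ and $p \neq q'$, Proposition \ref{prop_inter_charvar_reelle} forces $\rho$ to be reducible. Hence no point of the intersection is the character of an irreducible representation, i.e. the two character varieties meet only at points corresponding to reducible representations (and such points do occur, for instance the isolated ones and, by Remark \ref{rem_tot_red_z3z3}, certain reducible points of $X_0$).

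I do not expect a serious obstacle here, since all of the genuinely nontrivial input — identifying irreducible characters fixed by $\Phi_2$ with unitary representations, and analysing reducible characters inside $X_0$ — has already been done in the preceding propositions and remarks. The only point needing a little care is the \emph{irreducible-or-reducible} dichotomy for the chosen semi-simple representative of $x$, together with the fact (Theorem \ref{thm_semisimple_char}) that an irreducible character has a unique semi-simple representative, which guarantees that the two cases in the second paragraph are mutually exclusive and the case analysis is exhaustive.
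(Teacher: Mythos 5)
Your argument is correct and follows essentially the same route as the paper: decompose $\mathrm{Fix}(\Phi_2)$ into the $15$ isolated (totally reducible) points and $X_0 \cap \mathrm{Fix}(\Phi_2)$, apply Proposition \ref{prop_invol_traces} to irreducible characters, Remark \ref{rem_tot_red_z3z3} to reducible ones, and Proposition \ref{prop_inter_charvar_reelle} for the intersection claim. Your explicit appeal to a semi-simple representative and to Theorem \ref{thm_semisimple_char} only makes precise what the paper leaves implicit.
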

 
At last, we are going to draw some slices of $\mathrm{Fix}(\Phi_2)$, corresponding to projections on coordinates $(z,w)$, followed by a restriction to a slice of the form $z=z_0$ or $w = w_0$.
Recall that the projection on coordinates $(z,w)$ is a double cover outside from the level set $\Delta(z,w) = 0$, where points have a unique pre-image.
 We draw, in a plane of the form $(z,w_0)$, the curve $\Delta(z,w_0)=0$, and then we identify the regions contained in $\mathcal{X}_{\su21}(\z3z3)$ and those contained in $\mathcal{X}_{\mathrm{SU}(3)}(\z3z3)$.
 
 \subsubsection{The Parker-Will slice}
  In their article \cite{ParkerWill}, Parker and Will give an explicit parametrization of representations of $\z3z3 = \langle s,t \rangle$ taking values in $\su21$ such that the image of $st$ is unipotent. This corresponds exactly to representations such that the trace of the image of $st$ is equal to $3$. They form a family of representations of the fundamental group of the Whitehead link complement containing the holonomy representation of a \CR {} uniformization of the manifold. This particular representation has coordinates $(z,w,x) = (3,3,\frac{15 + 3i\sqrt{15}}{2})$. We can see this slice in figure \ref{fig_tranche_parker_will}. We see three lobes corresponding to representations taking values in $\su21$, which intersect in a singular point, of coordinate $z=0$, which corresponds to a totally reducible representation, of coordinates $(z,w,x)= (0,3,3)$. Going back to coordinates $(z,w,x)$ on $X_0 \cap \mathrm{Fix}(\Phi_2)$, the representations of the slice $z = 0$ form, topologically, three spheres touching at a single point.

 \begin{figure}[ht]
 \center
  \includegraphics[width=6.5cm]{./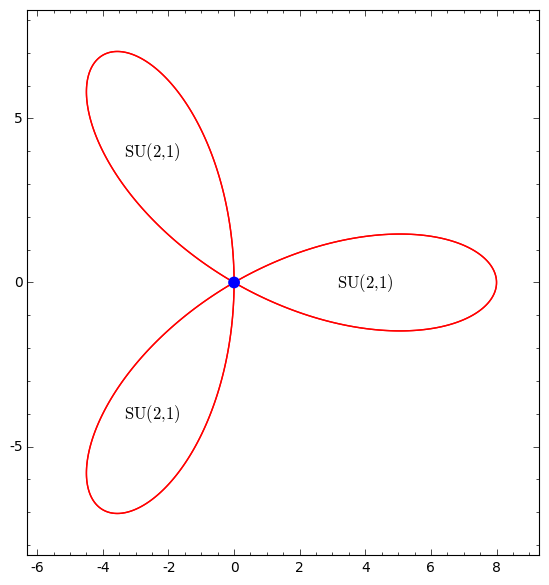}
  \caption{The Parker-Will slice of $\mathcal{X}_{\su21}(\z3z3)$.} \label{fig_tranche_parker_will}
 \end{figure}

 \subsubsection{The Thistlethwaite slice}
  In the last section of their article \cite{character_sl3c}, Falbel, Guilloux, Koseleff, Roullier and Thistlethwaite give an explicit parametrization of representations lifting the irreducible components $R_1$, $R_2$ and $R_3$ of $\mathcal{X}_{\sl3c}(\Gamma_8)$, as we saw in Subsection \ref{sous_sect_char_sl3c_m8}. They also give necessary and sufficient conditions for a representation to take values in $\su21$ or $\mathrm{SU}(3)$ : therefore they parametrize lifts of the intersections of $R_1$ and $R_2$ with $ \mathcal{X}_{\su21}(\Gamma_8)$ and $\mathcal{X}_{\mathrm{SU}(3)}(\Gamma_8)$. 
Recall that the fundamental group of the figure eight knot complement has the following presentation:

\[\Gamma_8 = \langle g_1, g_2 , g_3 \mid g_2 = [g_3 , g_1^{-1}] , g_1 g_2 = g_2 g_3 \rangle \]  
  As noticed by Deraux in \cite{deraux_uniformizations} and by Parker and Will in \cite{ParkerWill}, if $G_1$, $G_2$ and $G_3$ are the images of $g_1,g_2$ and $g_3$ respectively by a representation with character in $R_2$, then $(G_1G_2)=(G_1^2G_2)^3 = G_2^4 = \mathrm{Id}$. Setting $T = (G_1G_2)^{-1}$ and $S = (G_1^2G_2)$, we have two elements of $\sl3c$ of order 3 which generate the image of the representation, since $G_1 = ST , G_3 = TS$ and $G_2 = (TST)^{-1} = (TST)^{3}$. Hence we can consider $R_2 \subset \mathcal{X}_{\sl3c}(\z3z3)$ : this component corresponds to the slice of coordinate $w = 1$, since $TST$ has order 4 if and only if $\mathrm{tr}(TST) = \mathrm{tr}(ST^{2}) = \mathrm{tr}(ST^{-1}) = 1$. We can see this slice in figure \ref{fig_tranche_fgkrt}. It has three regions of representations taking values in $\su21$ and a region of representations taking values in $\mathrm{SU}(3)$. They intersect at three singular points, corresponding to reducible representations.
Going back to coordinates $(z,x)$ on on the slice $w=1$ of $X_0 \cap \mathrm{Fix}(\Phi_2)$, these regions are the images of four topological spheres which intersect at three points.
 
  \begin{figure}[htbp]
 \center
  \includegraphics[width=7cm]{./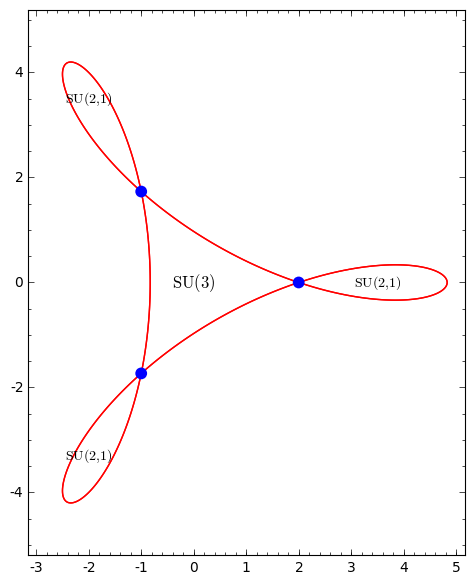}
  \caption{The slice of Falbel, Guilloux, Koseleff, Roullier and Thistlethwaite of $\mathcal{X}_{\su21}(\z3z3) \cup \mathcal{X}_{\mathrm{SU}(3)}(\z3z3)$. It is given by $w = 1$} \label{fig_tranche_fgkrt}
 \end{figure}
 
  \subsubsection{Other remarkable slices}
 At last, to complete the whole picture, we describe three more slices of $X_0 \cap \mathrm{Fix}(\Phi_2)$. Recall that, thanks to Proposition \ref{prop_rep_irr_lisses}, a slice of the form $w=w_0$ will only have singular points if $w_0^3 \in \mathbb{R}$. On the one hand, in figure \ref{fig_tranches_z3z3_legende}, we see the slices $w = 3.5$ and $w=3.5 + 0,1 i$. In each one there are three regions corresponding to irreducible representations taking values in $\su21$, which intersect, in the slice $w = 3.5$ at three points corresponding to reducible representations. There are no points corresponding to representations with values in $\mathrm{SU}(3)$.
 
 \begin{figure}[htbp]
\center
\begin{subfigure}{0.4\textwidth}
 \includegraphics[width=4.5cm]{./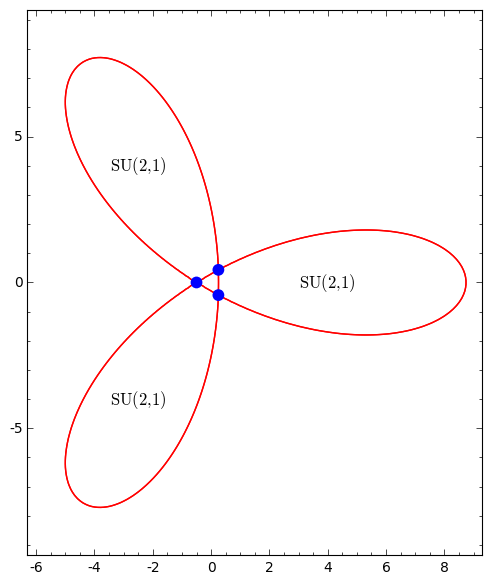}
 \caption{The slice $w_0= 3.5$. There are three singular points.}
 \end{subfigure} \hspace{1cm}
\begin{subfigure}{0.4\textwidth}
 \includegraphics[width=4.5cm]{./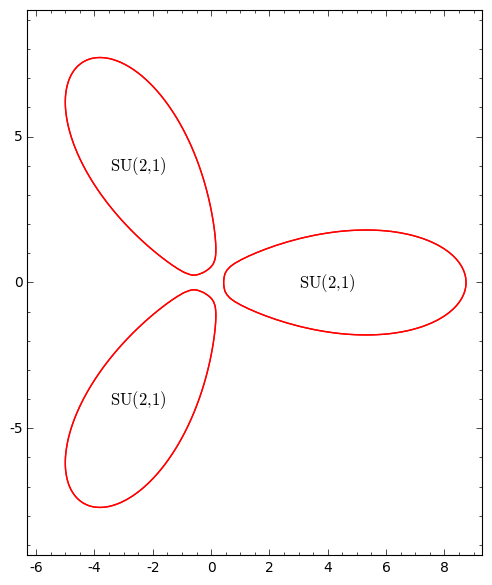}
 \caption{The slice $w_0 = 3.5 + 0,1i$. The region is smooth.}
 \end{subfigure}
 \caption{The slices $w_0= 3.5$ and $w_0 = 3.5 + 0.1i$.} \label{fig_tranches_z3z3_legende}
\end{figure} 
 
 On the other hand, in figure \ref{fig_tranche_z3z3_1_legende}, we see the slice $w= 1 + 0,1i$ : there are three regions corresponding to irreducible representations taking values in $\su21$ and a region corresponding to representations taking values in $\mathrm{SU}(3)$.
  
  \begin{figure}[htbp]
 \center
  \includegraphics[width=7cm]{./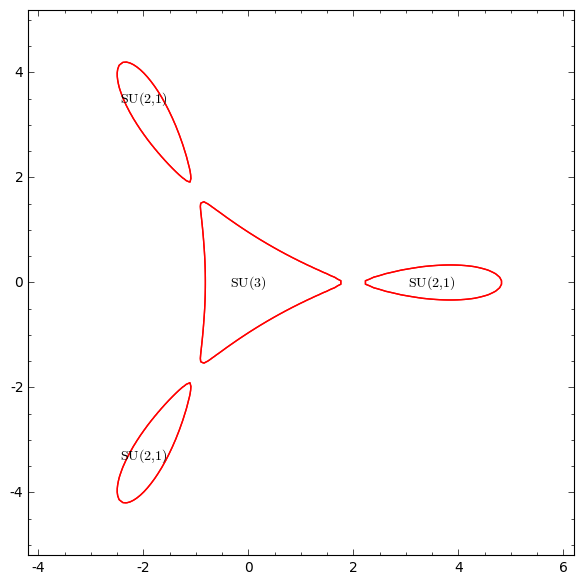}
  \caption{The slice $w_0 = 1 + 0,1i$. The region is smooth.} \label{fig_tranche_z3z3_1_legende}
 \end{figure}

\break

 \bibliographystyle{alpha}
\bibliography{./tex/biblio_these_updated}

\end{document}